
\documentclass[12pt,leqno]{amsart}
%
%
\usepackage{amssymb}
\usepackage{enumerate}
\usepackage{mathrsfs}
\usepackage{graphicx,color}
%
%
%
%
\textwidth=16cm
\textheight=23cm
\oddsidemargin=0mm
\evensidemargin=0mm
\topmargin=0mm
%
%
\theoremstyle{plain} 
\newtheorem{theorem}{Theorem}[section] 
 
\newtheorem{proposition}[theorem]{Proposition}
\newtheorem{lemma}[theorem]{Lemma} 

\theoremstyle{definition} 

\newtheorem{remark}[theorem]{Remark}

%
%
%

%
%
%
%
%
\DeclareMathOperator{\supp}{spt}

\DeclareMathOperator{\dist}{dist}
\DeclareMathOperator{\tr}{tr}
\DeclareMathOperator{\Div}{div}

\newcommand{\R}{\mathbb{R}}


%
%
%
%
%
\usepackage{constants}
\newconstantfamily{const}{symbol=C}
%
%
%
%
%
\numberwithin{equation}{section}
%



\usepackage{bm}
\renewcommand{\vec}[1]{\bm{#1}}

%
\title{Gradient estimates for mean curvature flow with Neumann boundary
conditions} 
\author{Masashi Mizuno}
\address[Masashi Mizuno]{Department of Mathematics, College of Science
and Technology, Nihon University, Tokyo 101-8308 JAPAN}
\email{mizuno@math.cst.nihon-u.ac.jp}
\author{Keisuke Takasao}
\address[Keisuke Takasao]{Graduate School of Mathematical Sciences, University of Tokyo, Komaba 3-8-1, Meguro-ku, Tokyo 153-8914, JAPAN}
\email{takasao@ms.u-tokyo.ac.jp}

\thanks{This work was supported by JSPS KAKENHI Grant Numbers 25800084,
25247008.}

\keywords{Mean curvature flow, boundary gradient estimates, boundary
monotonicity formula}
\subjclass[2000]{Primary~35K93, Secondary~53C44,35B65}

%
%
%
\pagestyle{plain}
\allowdisplaybreaks[1]
%
%
\begin{document}

\begin{abstract}
We study the mean curvature flow of graphs both with Neumann boundary
conditions and transport terms. We derive boundary gradient estimates
for the mean curvature flow. As an application, the existence of the
mean curvature flow of graphs is presented. A key argument is a boundary
monotonicity formula of a Huisken type derived using reflected backward
heat kernels. Furthermore, we provide regularity conditions for the
transport terms.
\end{abstract}

\maketitle

\section{Introduction}

We consider the mean curvature flow of graphs with transport terms and Neumann
boundary conditions:
\begin{equation}
 \label{eq:1.1}
  \left\{
 \begin{aligned}
  \frac{\partial_tu}{\sqrt{1+|du|^2}}
  &=\Div\left(\frac{du}{\sqrt{1+|du|^2}}\right)
  +\vec{f}(x,u,t)\cdot
  \vec{n}&\quad &x\in\Omega,\ t>0, \\
  du\cdot\nu\big|_{\partial\Omega}&=0,& &t>0, \\
  u(x,0)&=u_0(x),& &x\in\Omega,
  \end{aligned}
  \right.
\end{equation}
where $\Omega\subset\R^n$ is a bounded domain with a smooth boundary,
$\nu$ is an outer unit normal vector on $\partial\Omega$,
$u=u(x,t):\Omega\times[0,\infty)\rightarrow\R$ is an unknown function,
$du:=(\partial_{x_1}u,\ldots,\partial_{x_n}u)$ is the tangential
gradient of $u$, $u_0=u_0(x):\Omega\rightarrow\R$ is given initial
data, $\vec{f}:\Omega\times\R\times[0,\infty)\rightarrow\R^{n+1}$ is
a given transport term, and $\vec{n}=\frac1{\sqrt{1+|du|^2}}(-du,1)$. 
For a
solution $u$ of \eqref{eq:1.1} and $t>0$, the graph of $u(x,t)$, which
is
\begin{equation}
 \label{eq:1.2}
 \Gamma_t:=\{(x,u(x,t)):x\in\Omega\},
\end{equation}
satisfies the mean curvature flow with the transport term, which is
subjected to right angle boundary conditions given by
\begin{equation}
 \label{eq:1.3}
 \left\{
  \begin{aligned}
   \vec{V}&=\vec{H}+(\vec{f}\cdot\vec{n})\vec{n},&\quad \text{on}\
   \Gamma_t,\ &t>0, \\
   \Gamma_t&\perp\partial(\Omega\times \mathbb{R}),& &t>0, \\
  \end{aligned}
 \right.
\end{equation}
where
$\vec{n}:=\frac1{\sqrt{1+|du|^2}}(-du,1)$ is the unit normal vector of
$\Gamma_t$, 
$\vec{V}:=\frac{\partial_tu}{\sqrt{1+|du|^2}}\vec{n}$ is the normal
velocity vector of $\Gamma_t$, and
$\vec{H}:=\Div(\frac{du}{\sqrt{1+|du|^2}})\vec{n}$ is the mean curvature
vector of $\Gamma_t$ (see Figure~\ref{fig:1.1}). 
It is interesting to derive the regularity criterion of the transport
term to obtain the classical solution of
\eqref{eq:1.1}. 
Liu-Sato-Tonegawa~\cite{MR2956324} studied the following incompressible and viscous non-Newtonian two-phase fluid flow:
\begin{equation}
\label{eq:1.4}
 \left\{
  \begin{aligned}
   &\frac{\partial \vec{f}}{\partial t} +\vec{f} \cdot \nabla \vec{f} 
   = \Div ( T^{\pm} (\vec{f},\Pi)) , \ \Div \vec{f} =0, 
   & \quad \text{on} \ \Omega ^{\pm} _t , \ & t>0,\\
   &\vec{n} \cdot (T^+ (\vec{f} ,\Pi) -T^- (\vec{f} ,\Pi)) =\vec{H},
   &\quad \text{on} \ \Gamma_t,\ &t>0, \\
   &\vec{V}=\vec{H}+(\vec{f}\cdot\vec{n})\vec{n},&\quad \text{on}\
   \Gamma_t,\ &t>0,
  \end{aligned}
 \right.
\end{equation}
where $\Omega ^+ _t \cup \Omega ^-  _t \cup \Gamma _t =\mathbb{T}^{n+1}=(\mathbb{R} / \mathbb{Z})^{n+1}$, $\vec{f}$ is the velocity vector of the fluids, $T^\pm$ is the stress tensor of the fluids, and $\Pi$ is the pressure of the fluids. The physical background of \eqref{eq:1.4} was studied by Liu-Walkington~\cite{MR1857673}.
The phase
boundary $\Gamma _t$ moves by the fluid flow and its mean curvature. In
\eqref{eq:1.3}, the transport term is corresponding to the fluid
velocity of \eqref{eq:1.4}. Since the regularity of non-Newtonian fluid flow is still
difficult problems, it is important to study regularity conditions to
obtain the classical solution of \eqref{eq:1.1}.

\begin{figure}
 \centering
 \includegraphics[height=120pt]{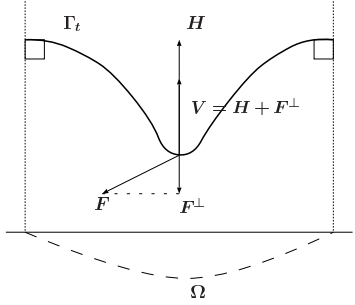}
 \caption{Mean curvature flow with the transport $\vec{F}$ and with the
 Neumann boundary condition.}
 \label{fig:1.1}
\end{figure}

To study the behavior of $\Gamma_t$, we need to investigate
$v:=\sqrt{1+|du|^2}$, which is the volume element of $\Gamma_t$. Thus,
it is important to derive gradient estimates for
\eqref{eq:1.1}. Interior gradient estimates for \eqref{eq:1.1} under
$\vec{f}\equiv0$ were studied by Ecker-Huisken~\cite{MR1117150} when the
initial surface is $C^1$, and by Colding-Minicozzi II~\cite{MR2099114}
when $u_0$ is bounded. It is difficult to apply their arguments to
\eqref{eq:1.1} under non-smooth transport terms because their arguments
essentially use the comparison arguments.

Ecker-Huisken~\cite{MR1025164} also derived the interior gradient
estimates for \eqref{eq:1.1} under $\vec{f}\equiv0$. In their arguments,
monotonicity formula is crucial to show the gradient estimates.
Takasao~\cite{MR3058702} studied the interior gradient estimates for
\eqref{eq:1.1} when $u_0$ is $C^1$ and the transport $\vec{f}$ is
bounded in time and space variables. An essential part of Takasao's
proof is to derive the monotonicity formula of the Huisken type with the
bounded transport terms.

Huisken~\cite{MR0983300} studied \eqref{eq:1.1} with the Neumann
boundary condition and without the transport $\vec{f}$. He showed the
existence of a classical solution of \eqref{eq:1.1} under
$\vec{f}\equiv0$. To show the existence of the solution, it is important
to derive up-to-boundary a priori gradient estimates of
\eqref{eq:1.1}. Huisken showed the gradient estimates when the initial
data $u_0$ is $C^{2,\alpha}$ up to boundary and $\partial\Omega$ is of
class $C^{2,\alpha}$ via the Schauder estimates. Stahl~\cite{MR1393271}
also considered the gradient estimates of \eqref{eq:1.1} without the
transport and obtained some blow-up criterion of the classical solution
of~\eqref{eq:1.1} under $\vec{f}\equiv0$. Up-to-boundary a priori
gradient estimates of the mean curvature flow with the Neumann boundary
condition are studied by many researchers and we mention
\cite{MR1384396, MR2180601, MR3479560, 1602.03614, MR2886118,
MR1402731, MR1393271, MR1425580, MR2691040, 1405.7774, MR3200337,
MR3150205}.
Our arguments are similar to Ecker's or Takasao's work~\cite{MR2024995,
MR3058702}. In our setting, we need to derive a boundary monotonicity
formula for \eqref{eq:1.1}. From this point, Buckland~\cite{MR2180601}
obtained the boundary monotonicity formula for \eqref{eq:1.1} under
$\vec{f}\equiv0$. Takasao derived the monotonicity formula for
\eqref{eq:1.1} with transport terms in ~\cite{MR3058702} but the
condition was not optimal.  On the other hand, reasonable conditions for
the transport terms for the regularity of weak mean curvature flow were
obtained in \cite{MR3194675, MR3176585}. In this paper, we obtain a
priori gradient estimates with reasonable conditions for the transport
terms.

Our problem \eqref{eq:1.1} imposes Neumann boundary conditions; thus,
up-to-the-boundary gradient estimates are also important. Mizuno and
Tonegawa~\cite{MR3348119} studied weak mean curvature flow with Neumann
boundary conditions via phase field methods. To study boundary behavior,
it was important to derive an $\varepsilon$-diffused boundary
monotonicity formula of a Huisken type via reflected backward heat
kernels (cf. \cite{MR1030675}, \cite{MR1237490}). Thus, it is also
important to derive the boundary monotonicity formula for \eqref{eq:1.1}
and determine the optimal regularity condition for the transport
terms. In this paper, we derive the boundary monotonicity formula for
\eqref{eq:1.1} and as an application, we derive a priori boundary
gradient estimates and prove the existence of a classical solution of
\eqref{eq:1.1}.

This paper is organized as follows. In section \ref{sec:2}, we present
basic notation and the main results. In section \ref{sec:3}, we derive
the boundary monotonicity formula for \eqref{eq:1.1}. In section
\ref{sec:4}, we derive the up-to-boundary gradient estimates for
\eqref{eq:1.1} and some integral estimates for the transport terms. In
section \ref{sec:5}, we prove the existence of the classical solution of
\eqref{eq:1.1}. In section \ref{sec:6}, we discuss some optimality for
the transport terms to obtain the gradient estimates for \eqref{eq:1.1}.

\section{Preliminaries and main results}
\label{sec:2}

\subsection{Notation}
Let $\vec{\nu}$ be an outer unit normal vector on
$\partial(\Omega\times\R)$; $\vec{\nu}=(\nu,0)$. For $n$-dimensional
symmetric matrices $A$ and $B$, define the inner product $A:B$ as
$A:B=\tr(AB)$.  Set $Q_T:=\Omega \times (0,T)$ and $Q_T^\varepsilon
:=\Omega \times (\varepsilon,T)$ for $0<\varepsilon <T$. Let $d$ and $D$
be the gradients on $\Omega$ and $\Omega\times\R$, respectively. Let
$D_{\Gamma_t}$ and $\Delta_{\Gamma_t}$ be the covariant differentiation and
Laplace-Beltrami operator on $\Gamma_t$, respectively. For a solution
$u$ of \eqref{eq:1.1}, let
$h:=-\Div\Big(\frac{du}{\sqrt{1+|du|^2}}\Big)$,
$v:=\sqrt{1+|du|^2}$. Then, equation \eqref{eq:1.1} becomes
\begin{equation}
 \label{eq:2.4}
  \partial_tu=-vh+(\vec{f}\cdot\vec{n})v.
\end{equation}

\subsection{Main results}

Let $T_0>0$ be fixed. We impose a regularity assumption on the transport term
such that for $p,q\geq1$ satisfying $\frac{n}p+\frac2q<1$,
\begin{equation}
 \label{eq:2.3}
  \|\vec{f}\|_{L^q_tL^p_x(0,T_0,\Gamma_t)}
  :=
  \left(
   \int_0^{T_0}\left(
		\int_{\Gamma_t}|\vec{f}(X,t)|^p\,d\mathscr{H}^n
	       \right)^\frac{q}{p}
   dt
  \right)^\frac1q
  <\infty.
\end{equation}

\begin{remark}
 Using the Meyer-Ziemer inequality (cf. 
 \cite[p. 266, Theorem 5.12.4]{MR1014685}),
 \[
 \int_{\Gamma_t}|\vec{f}(X,t)|^p\,d\mathscr{H}^n
 \leq C\|\vec{f}(\cdot,t)\|^p_{W^{1,p}(\Omega\times\R)};
 \]
 hence, our assumption \eqref{eq:2.3} is fulfilled if
 $\vec{f}\in L^q([0,T_0]:W^{1,p}(\Omega\times\R))$.
\end{remark}

First we derive a priori gradient estimates for \eqref{eq:1.1}.

\begin{theorem}[A priori estimates for the gradient]
 \label{thm:1} 
 Let $u$ be a classical solution of \eqref{eq:1.1} on
 $\Omega\times(0,T_0)$. Assume that $\Omega$ is convex, $u_0\in
 W^{1,\infty}(\Omega)$, and the transport term $\vec{f}$ satisfies
 \eqref{eq:2.3}. Then there exists $T>0$
 depending only on $n$, $p$, $q$, $T_0$, $\|du_0\|_\infty$,
 $\|\vec{f}\|_{L^q_tL^p_x(0,T_0,\Gamma_t)}$, and $\Omega$
 such that
 \begin{equation}
  \label{eq:2.2}
   \sup_{0<t<T,x\in\overline{\Omega}}\sqrt{1+|du(x,t)|^2}
   \leq 4
   (1+\|du_0\|_\infty^2).
 \end{equation}
\end{theorem}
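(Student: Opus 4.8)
The plan is to follow the Ecker--Huisken / Takasao strategy adapted to the boundary, using the reflected backward heat kernel to handle the Neumann condition. First I would introduce the quantity $v=\sqrt{1+|du|^2}$ and recall (or derive) its evolution equation along the flow \eqref{eq:2.4}; on a graph moving by mean curvature with a transport forcing, $v$ satisfies a parabolic inequality of the form $\partial_t v \le \Delta_{\Gamma_t} v - |A|^2 v + (\text{transport terms involving }\vec f \text{ and } D_{\Gamma_t}\vec f)$, where the transport contribution can be bounded by $C|\vec f|\,|A|\,v + \dots$; after a Cauchy--Schwarz/Young splitting against the good term $-|A|^2 v$, one is left with a source controlled by $|\vec f|^2 v$. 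The Neumann condition $du\cdot\nu|_{\partial\Omega}=0$ together with convexity of $\Omega$ should give the crucial sign: $D_{\vec\nu} v \le 0$ on $\partial\Omega\times\R$, so that after reflection across the boundary $v$ (extended evenly) is still a subsolution in a neighborhood, and no unfavorable boundary integral appears.

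Next I would set up the boundary monotonicity formula of Huisken type, i.e. the one established in Section \ref{sec:3} of the paper (which I may assume): for the reflected backward heat kernel $\bar\rho_{(X_0,t_0)}$ centered at a point $(X_0,t_0)$ on or near the boundary, the weighted area $\int_{\Gamma_t}\bar\rho\,d\mathscr H^n$ is almost-monotone, with an error term governed precisely by $\|\vec f\|_{L^q_t L^p_x}$ under the subcritical scaling $\frac np+\frac2q<1$. The point of the scaling hypothesis is that, by Hölder in space against the Gaussian weight and then in time, the transport error over a parabolic cylinder of radius $r$ is $o(1)$ as $r\to 0$ — this is what makes the density at a blow-up point equal to that of a plane, hence forces, via Brakke/White-type local regularity or directly via the clearing-out lemma, an a priori bound on $|A|$ and thus on $v$ at small times. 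The convexity of $\Omega$ is used again here so that the reflected kernel genuinely dominates the true Gaussian on $\Omega\times\R$.

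Then the gradient estimate itself: I would run a local maximum-principle / De Giorgi--Moser iteration for $v$ weighted by the backward heat kernel, exactly as in Ecker's \cite{MR2024995} interior argument but with the reflected kernel so that the boundary term drops out. Testing the subsolution inequality for $v$ (or for $v$ times a suitable Gaussian cutoff) and absorbing the transport source using \eqref{eq:2.3} and the Ladyzhenskaya--Sobolev inequality on $\Gamma_t$ (whose constants are controlled because we are on a graph), one obtains
\[
 \sup_{\Gamma_t} v \le C\Big(\sup_{\Gamma_0} v\Big)\exp\big(C\|\vec f\|^{\theta}_{L^q_tL^p_x}\, t^{\sigma}\big)
\]
for some positive exponents $\theta,\sigma$ depending on $n,p,q$; choosing $T$ small enough (depending on the listed data) makes the exponential factor at most $2$, which, combined with $\sup_{\Gamma_0}v = \sqrt{1+\|du_0\|_\infty^2}\le 1+\|du_0\|_\infty^2$, yields the bound $4(1+\|du_0\|_\infty^2)$ claimed in \eqref{eq:2.2}. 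The initial regularity is only $u_0\in W^{1,\infty}$, not $C^1$, so a preliminary remark is needed: apply the estimate on $(\delta,T)$ with the (smooth) datum $u(\cdot,\delta)$ and let $\delta\to 0$, using that $\|du(\cdot,\delta)\|_\infty$ is controlled by the interior estimate of \cite{MR3058702} or by the monotonicity argument itself.

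The main obstacle I expect is the transport error in the boundary monotonicity formula: one must show that, under only the subcritical integrability $\frac np+\frac2q<1$ (and no smoothness of $\vec f$), the reflected-kernel-weighted integral $\int_0^{t_0}\int_{\Gamma_t}|\vec f|\,\bar\rho\,d\mathscr H^n\,dt$ is finite and tends to zero on shrinking parabolic cylinders, uniformly over base points up to the boundary. This requires a careful Hölder estimate pairing the $L^p_x$ norm of $\vec f$ on $\Gamma_t$ against the $L^{p'}$ norm of the Gaussian — whose value scales like $r^{n/p - n}$ — followed by an $L^q_t$--$L^{q'}_t$ split in time producing an extra $r^{2/q-2}\cdot r^2 = r^{2/q}$; the product $r^{\,n/p+2/q}\to 0$ exactly under the hypothesis. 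Making this uniform near the corner $\partial\Omega\times\R$, where the reflected kernel is a sum of two Gaussians, and checking that the reflected surface $\Gamma_t\cup\widehat\Gamma_t$ (even reflection) still has locally bounded area ratios, is the delicate technical point; everything else is a by-now standard iteration.
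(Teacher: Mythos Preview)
Your outline captures several correct ingredients --- the reflected backward heat kernel, the sign $D_{\vec\nu}v\le 0$ from convexity, and the role of the subcritical scaling $\tfrac np+\tfrac2q<1$ in the H\"older estimate against the Gaussian --- but the core mechanism you describe is not the one the paper uses, and the sketch has a genuine gap.

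First, the paper does \emph{not} go through blow-up, density ratios, clearing-out, or Brakke/White local regularity, nor does it run a De Giorgi--Moser iteration. The argument is a single application of Ecker's ``noncompact maximum principle'': one takes $\phi=v$ as the weight in the boundary monotonicity inequality (Proposition~\ref{prop:3.6}) for $\int_{\Gamma_t} v(\rho_1+\rho_2)\,d\mathscr H^n$, multiplies by an integrating factor $\eta(\tau)=\exp\bigl(-C\!\int_0^\tau(s-t)^{-3/4}\,dt\bigr)$ to absorb the $(s-t)^{-3/4}$ error from the reflected kernel, integrates in $t$ from $0$ to $\tau$, and then lets $\tau\uparrow s$. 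Since the Gaussian concentrates at $Y=(y,u(y,s))$, the left side converges to (a multiple of) the pointwise value $v(y,s)$. No iteration is needed.

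Second --- and this is the real gap --- the transport contribution in the evolution of $v$ is $du\cdot d(\vec f\cdot\vec n)$, which contains a \emph{derivative} of $\vec f$. Under the hypothesis $\vec f\in L^q_tL^p_x$ alone there is no control on $D\vec f$, so your ``bound by $C|\vec f|\,|A|\,v$ and then Young'' is not available pointwise. The paper's Lemma~\ref{lem:4.1} handles this by an integration by parts in $x$ against the weight $\bar\rho v$, which converts the bad term into three pieces: one involving $h$ (absorbed by $\tfrac12|A_t|^2v$), one involving $dv$ (absorbed by $|D_{\Gamma_t}v|^2/v$, but only after a split that forces a factor $v^3$), and one involving $D\bar\rho$ (estimated by H\"older with a different exponent pair). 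The upshot is a bound of the form
\[
e^{-Cs^{1/4}}\,v(y,s)\;\le\;2\|v_0\|_\infty^2+C(s)\,M_T^3,\qquad M_T:=\sup_{\Omega\times(0,T)}v,
\]
with $C(s)\to 0$ as $s\downarrow 0$. This is \emph{cubic} in $M_T$, not linear, so it cannot be closed by the Gronwall/exponential step you wrote; instead one takes the supremum over $(y,s)$ to obtain $C(T)M_T^3-e^{-CT^{1/4}}M_T+2\|v_0\|_\infty^2\ge 0$ and argues by continuity that $M_T$ cannot cross the level $4\|v_0\|_\infty^2$ for $T$ small. Your proposed bound $\sup v\le C(\sup v_0)\exp(C\|\vec f\|^\theta t^\sigma)$ would require the transport error to be linear in $M_T$, which the absorption constraints do not permit.

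Finally, the approximation step $u_0\in W^{1,\infty}\Rightarrow$ start at $t=\delta$ is unnecessary here: the estimate is for a classical solution on $(0,T_0)$, and $\|v_0\|_\infty$ enters only through $\int_{\Gamma_0}v_0^2\rho_1(X,0)\,dx\le\|v_0\|_\infty^2$.
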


The regularity assumption \eqref{eq:2.3} is reasonable from
blow-up arguments. Indeed, using the scale transform
\[
 x=\lambda y,\quad t=\lambda^2s,\quad w(y,s)=\frac1\lambda u(x,t),
\]
we obtain
\[
 \left\{
  \begin{aligned}
   \frac{\partial_sw}{\sqrt{1+|dw|^2}}
   &=\Div\left(\frac{dw}{\sqrt{1+|dw|^2}}\right)
  +\lambda\vec{f}(\lambda y,\lambda w,\lambda^2s)\cdot\dfrac{(-dw,1)}{\sqrt{1+|dw|^2}},
\\
   dw&=du.
  \end{aligned}
  \right.
\]
Then
\[
 \|\lambda\vec{f}(\lambda y,\lambda w,\lambda^2s)\|_{L^q_sL^p_y}
 =\lambda^{1-\frac{n}{p}-\frac2q}\|\vec{f}\|_{L^q_tL^p_x}
\]
and $\|\lambda\vec{f}(\lambda y,\lambda
w,\lambda^2s)\|_{L^q_sL^p_y}\rightarrow0$ as $\lambda\rightarrow0$ if
\eqref{eq:2.3} is fulfilled; that is, the transport is a small
perturbation for blow-up arguments. Note that the regularity assumption
\eqref{eq:2.3} is the same as the assumption for the parabolic Allard's
regularity theory developed by
Kasai-Tonegawa~\cite{MR3194675,MR3176585}. Furthermore, our results
include results from the study by Takasao~\cite{MR3058702} because our
argument also applies to interior gradient estimates. We further explain
in Section \ref{sec:6}
if \eqref{eq:2.3} is fulfilled for $\frac{n}{p}+\frac{2}{q}>1$, then
there is a solution of \eqref{eq:1.1} such that the gradient of the
solution is unbounded.

From the regularity estimate \eqref{eq:2.2}, the graph $\Gamma_t$
subjected to \eqref{eq:1.2} is a $C^1$-Riemannian manifold up to the
boundary. Furthermore, the graph $\Gamma_t$ is perpendicular to
$\partial\Omega\times\R$, which is the boundary of a cylinder
$\Omega\times\R$. In terms of partial differential equations, Theorem
\ref{thm:1} can be regarded as an up-to-the-boundary parabolic smoothing
effect for
$\partial_tu-\sqrt{1+|du|^2}\Div\Bigl(\frac{du}{\sqrt{1+|du|^2}}\Bigr)$.
The non-divergence elliptic differential operator
$-\sqrt{1+|du|^2}\Div\Bigl(\frac{du}{\sqrt{1+|du|^2}}\Bigr)$ is
degenerate hence regularity for solutions of \eqref{eq:1.1} is not
clear. When the gradient of solutions is bounded, then the Schauder
estimates for \eqref{eq:1.1} is applicable thus the higher regularity of
solutions and the existence of a solution of \eqref{eq:1.1} can be
deduced. Theorem \ref{thm:1} also can be regarded as a parabolic
smoothing effect for the mean curvature operator. To summarize,
\eqref{eq:2.2} determines how we obtain regularity of the mean curvature
flow.

To obtain the gradient estimates via the comparison arguments, the
boundedness of $\|D\vec{f}\|_ \infty$ is needed. On the other hand, to
obtain the gradient estimates via the monotonicity formula of the
Huisken type, the boundedness of $\|D\vec{f}\|_ \infty$ is not needed.
Note that the idea using the weighted monotonicity formula is
called the noncompact maximum principle~\cite[Proposition 4.27]{MR2024995}.  To
show the up-to-boundary monotonicity formula, we introduce reflected
backward heat kernels to compute the boundary integrals and derive
integral estimates for the transport terms under the assumption
\eqref{eq:2.3}.

Next, we demonstrate the existence of a classical solution of
\eqref{eq:1.1}. We assume parabolic H\"older continuity for $\vec{f}$;
that is, there is $\alpha\in(0,1]$ such that
\begin{equation}
 \label{eq:2.1}
 K:= \sup_{(X,t),(Y,s) \in (\Omega \times \mathbb{R}) \times (0,T_0) } \frac{|\vec{f}(X,t) -\vec{f}(Y,s)|}{|X-Y|^\alpha +|t-s|^{\alpha/2}}< \infty .
\end{equation} 

\begin{theorem}[Existence of a classical solution]
 \label{thm:2}
 Assume that $\Omega$ is convex, $u_0\in W^{1,\infty}(\Omega)$ with
 $du_0\cdot\nu\equiv0$ on $\partial\Omega$ and the transport term
 $\vec{f}$ satisfies \eqref{eq:2.1} with some $\alpha\in(0,1]$.  Then,
 there exists a time local unique solution $u\in C(\overline{Q_T})\cap
 C^{2,\alpha}(Q_T^{\varepsilon})$ for all $\varepsilon>0$ of
 $(\ref{eq:1.1})$ with $u(0)=u_0$ for some $0<T<T_0$. Furthermore, for
 any $\varepsilon >0$ there exists $\Cl[const]{c-1.2}>0$  depending
 only on $n,\alpha,L ,K $ such that
 \begin{equation}
  \|u\|_{C^{ 2,\alpha}(Q _T^\varepsilon )} \leq  \Cr{c-1.2}.
   \label{schauder}
 \end{equation}
\end{theorem}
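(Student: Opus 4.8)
The plan is to obtain the existence of a classical solution of \eqref{eq:1.1} by a fixed-point argument built on the a priori gradient estimate of Theorem~\ref{thm:1}, combined with parabolic Schauder theory for the linearized oblique-derivative problem. Note first that assumption \eqref{eq:2.1} is stronger than \eqref{eq:2.3}: since $\vec{f}$ is $\alpha$-parabolic-H\"older on $(\Omega\times\R)\times(0,T_0)$ it is in particular bounded on any bounded cylinder, and because $\Gamma_t$ has uniformly bounded $\mathscr{H}^n$-measure on slabs where the gradient is controlled, $\|\vec{f}\|_{L^q_tL^p_x(0,T_0,\Gamma_t)}<\infty$ for every $p,q$. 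Hence Theorem~\ref{thm:1} applies and furnishes a time $T\in(0,T_0)$ and the bound \eqref{eq:2.2} for any classical solution on $\Omega\times(0,T)$; this $T$ depends only on the stated data. The strategy is then: (i) regularize the degenerate problem and the data to get smooth approximate solutions $u_m$ on $\Omega\times(0,T)$; (ii) use \eqref{eq:2.2} to make the operator uniformly parabolic with ellipticity constants depending only on $\|du_0\|_\infty$; (iii) bootstrap via interior-in-time Schauder estimates up to the boundary to get uniform $C^{2,\alpha}(Q_T^\varepsilon)$ bounds; (iv) pass to the limit; (v) prove uniqueness by a Gronwall/comparison argument on the difference of two solutions.

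In more detail, I would first set up the linearization. Given a function $w$ with $\|dw\|_\infty$ controlled by \eqref{eq:2.2}, freeze the coefficients $a^{ij}(dw)=\delta_{ij}-\frac{\partial_{x_i}w\,\partial_{x_j}w}{1+|dw|^2}$ and the inhomogeneous term $\vec{f}(x,w,t)\cdot\vec{n}(dw)\sqrt{1+|dw|^2}$, and solve the linear parabolic equation $\partial_t u = \sqrt{1+|dw|^2}\,a^{ij}(dw)\partial_{x_i x_j}u + (\text{lower order in }dw,d^2w)+ (\vec f\cdot\vec n)\sqrt{1+|dw|^2}$ with the conormal/Neumann condition $du\cdot\nu|_{\partial\Omega}=0$ and $u(0)=u_0$. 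The compatibility condition $du_0\cdot\nu\equiv0$ on $\partial\Omega$ is exactly what is needed so that this linear oblique-derivative problem is solvable without a corner singularity in time. Here $\|du_0\|_\infty$ appears as the uniform lower bound on the ellipticity. Because $u_0$ is only Lipschitz, I do not expect $C^{2,\alpha}$ regularity up to $t=0$; instead I would use that the linear problem has a unique solution that is continuous on $\overline{Q_T}$ and, by the parabolic smoothing (interior-in-time Schauder estimates, with the Neumann boundary handled by the standard reflection / flattening of $\partial\Omega$), belongs to $C^{2,\alpha}(Q_T^\varepsilon)$ for every $\varepsilon>0$, with a bound depending on $\varepsilon$ and on the H\"older norms of the coefficients and right-hand side --- hence on $n,\alpha,L,K$ and the gradient bound, i.e.\ ultimately on $n,\alpha,L,K$. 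This defines a map $w\mapsto u=\Phi(w)$ on the closed convex set $\mathcal S=\{w\in C(\overline{Q_T}): \|dw\|_\infty\le 2(1+\|du_0\|_\infty^2),\ w(0)=u_0,\ dw\cdot\nu|_{\partial\Omega}=0\}$ (or a suitable subset), and I would show $\Phi$ maps $\mathcal S$ into itself and is compact and continuous, so Schauder's fixed-point theorem gives a solution.

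The main obstacle is step (ii)-(iii): closing the loop on the a priori bound so that $\Phi(\mathcal S)\subset\mathcal S$, and getting the $C^{2,\alpha}$ estimate uniform in the regularization parameter. The point is that \eqref{eq:2.2} was proved for \emph{classical} solutions of the true nonlinear problem; for the fixed-point map one must verify that every fixed point (or every approximate solution $u_m$) is regular enough to be a classical solution on $Q_T^\varepsilon$ and hence inherits \eqref{eq:2.2}, and that the gradient bound propagates from $t>\varepsilon$ down to $t\to 0^+$ using continuity at $t=0$ and $u(0)=u_0$. A clean way around this is to first mollify $u_0$ to $u_0^m\in C^{2,\alpha}(\overline\Omega)$ with $du_0^m\cdot\nu\equiv 0$ and $\|du_0^m\|_\infty\le\|du_0\|_\infty+\tfrac1m$, solve the (now genuinely $C^{2,\alpha}$-up-to-$t=0$) nonlinear problem by the contraction mapping principle on a short time interval, extend the solution as long as the gradient stays bounded, and invoke Theorem~\ref{thm:1} to see that the maximal existence time is at least $T$ (with $T$ uniform in $m$). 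Then $\{u_m\}$ is bounded in $C(\overline{Q_T})\cap C^{2,\alpha}(Q_T^\varepsilon)$ uniformly in $m$, Arzel\`a--Ascoli gives a subsequential limit $u\in C(\overline{Q_T})\cap C^{2,\alpha}(Q_T^\varepsilon)$ solving \eqref{eq:1.1} with $u(0)=u_0$ and satisfying \eqref{schauder}, with $\Cr{c-1.2}$ depending only on $n,\alpha,L,K$ (and $\varepsilon$). Uniqueness follows by subtracting two solutions $u_1,u_2$, writing the difference equation as a linear uniformly parabolic equation (with coefficients built from $du_1,du_2$, controlled by \eqref{eq:2.2}) with homogeneous Neumann data and zero initial value, and applying the energy method: $\frac{d}{dt}\int_\Omega (u_1-u_2)^2\,dx \le C\int_\Omega(u_1-u_2)^2\,dx$ on $(\varepsilon,T)$, then letting $\varepsilon\to0$ using continuity at $t=0$, so $u_1\equiv u_2$.
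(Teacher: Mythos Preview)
Your overall architecture matches the paper's: first solve the problem for smooth initial data, obtain uniform-in-$m$ estimates via Theorem~\ref{thm:1} and Schauder theory, then pass to the limit for Lipschitz $u_0$; uniqueness at the end. The approximation step and the passage to the limit are carried out essentially as you describe.

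The main methodological difference is in how the smooth-data problem is handled. You correctly notice that a plain Schauder fixed-point argument on a set $\mathcal S$ defined by a gradient bound runs into trouble, since Theorem~\ref{thm:1} only controls $du$ for \emph{solutions of the nonlinear equation}, not for $\Phi(w)$ with arbitrary $w$. Your workaround is contraction mapping for short-time existence followed by a continuation argument using the a~priori gradient bound. This is a valid and standard route. The paper instead uses the Leray--Schauder fixed-point theorem: it linearizes as $\partial_t u=a_{ij}(dw)\partial_{x_ix_j}u+\vec f(x,w,t)\cdot(-du,1)$ on $X=C^{1,\beta}(Q_T)$, checks that the solution map $A:w\mapsto u_w$ is compact, and then shows that the set $\{u:u=\sigma Au\ \text{for some }\sigma\in[0,1]\}$ is bounded in $X$. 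The point is that any such $u$ solves the \emph{nonlinear} equation with $\vec f\cdot(-du,\sigma)$ and data $\sigma u_0$, so Theorem~\ref{thm:1} (whose proof goes through verbatim for this one-parameter family) applies directly to give the gradient bound, and then Schauder bootstrapping closes the estimate. This is a bit cleaner than contraction~$+$~continuation because the a~priori estimate is used exactly once, in its natural role. For uniqueness the paper invokes the comparison principle rather than your $L^2$ energy argument; either works here.

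Two small comments. First, your written linearization carries a spurious $\sqrt{1+|dw|^2}$ factor and ``lower order in $dw,d^2w$'' terms; the non-divergence form is simply $\partial_t u=a_{ij}(dw)\partial_{x_ix_j}u+\vec f(x,w,t)\cdot(-du,1)$, with no second derivatives of $w$ appearing. Second, your remark that \eqref{eq:2.1} implies \eqref{eq:2.3} is slightly circular as stated (the $L^q_tL^p_x$ norm is over $\Gamma_t$, which depends on the solution); in practice one uses that $\vec f$ is bounded on bounded cylinders together with the sup bound on $u$ from \eqref{supu} and the running gradient bound on the interval where one is working, which is exactly how the continuation argument proceeds.
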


Theorem \ref{thm:2} is deduced from the Leray-Schauder fixed point
theorem for the linearized problem of \eqref{eq:1.1}. Theorem
\ref{thm:1} is employed as a priori gradient estimates for the
Leray-Schauder fixed point theorem. As a result of the gradient bounds,
the linearized problem of \eqref{eq:1.1}
can be computed in the same class as the uniformly elliptic operator;
hence, we can derive the Schauder estimates for \eqref{eq:1.1} and apply
the Leray-Schauder fixed point theorem.

\section{Monotonicity of the metric}
\label{sec:3}
Our first task is to establish the up-to-the-boundary
monotonicity formula of the Huisken type. 

\begin{lemma}
 \label{lem:3.2}
 Let $u$ be a classical solution of \eqref{eq:1.1} and
 $v:=\sqrt{1+|du|^2}$. Then
 \begin{equation}
  \label{eq:3.2}
  \partial_tv-\Delta_{\Gamma_t}v
   -\left(\frac{du}{v}\cdot dv\right)\frac{\partial_tu}{v}
   =-|A_t|^2v-\frac{2|D_{\Gamma_t}v|^2}{v}
   +du\cdot d(\vec{f}\cdot\vec{n}),
 \end{equation}
 where $D_{\Gamma_t}$, $\Delta_{\Gamma_t}$ and $|A_t|$ denote covariant
 differentiation in $\Gamma_t$, Laplace-Beltrami operator, norm of
 second fundamental form of $\Gamma_t$ respectively.
\end{lemma}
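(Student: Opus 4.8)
The plan is to reduce \eqref{eq:3.2} to a purely geometric identity for the graph $\Gamma_t$ --- one involving neither the time derivative nor the transport term --- and then to verify that identity by the standard computation in graph coordinates; the transport term will cost essentially nothing. I would begin by differentiating $v=\sqrt{1+|du|^2}$. Since $\partial_t$ and $d$ commute, $\partial_tv=v^{-1}\,du\cdot d(\partial_tu)$ and $\partial_iv=v^{-1}\sum_j u_ju_{ij}$. Writing the equation in the form \eqref{eq:2.4} as $\partial_tu=v\psi$ with $\psi:=\partial_tu/v=-h+\vec f\cdot\vec n$ the normal speed of $\Gamma_t$, the product rule gives
\[
\partial_tv=v^{-1}\,du\cdot\left(\psi\,dv+v\,d\psi\right)=\left(\frac{du}{v}\cdot dv\right)\frac{\partial_tu}{v}+du\cdot d\psi,
\]
so that
\[
\partial_tv-\Delta_{\Gamma_t}v-\left(\frac{du}{v}\cdot dv\right)\frac{\partial_tu}{v}=du\cdot d\psi-\Delta_{\Gamma_t}v=-\,du\cdot dh+du\cdot d(\vec f\cdot\vec n)-\Delta_{\Gamma_t}v .
\]
Since $du\cdot d(\vec f\cdot\vec n)$ already appears on the right-hand side of \eqref{eq:3.2}, it suffices to prove the transport-free identity
\[
\Delta_{\Gamma_t}v=-\,du\cdot dh+|A_t|^2v+\frac{2|D_{\Gamma_t}v|^2}{v};
\]
substituting it into the previous display, the two $du\cdot dh$ terms cancel and \eqref{eq:3.2} follows.

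To prove this last identity I would use that $v^{-1}=\langle\vec n,e_{n+1}\rangle$ together with the identity $\Delta_{\Gamma_t}\vec n=-\nabla_{\Gamma_t}H-|A_t|^2\vec n$ (a consequence of the Codazzi equations), where $H$ is the scalar mean curvature normalized by $\vec H=H\vec n$, so that $H=\Div(du/v)=-h$. Taking the inner product with the constant vector $e_{n+1}$ gives $\Delta_{\Gamma_t}(v^{-1})=-\langle\nabla_{\Gamma_t}H,e_{n+1}\rangle-|A_t|^2v^{-1}$. In graph coordinates, with $g_{ij}=\delta_{ij}+u_iu_j$ and $g^{ij}=\delta_{ij}-u_iu_j/v^2$, one computes $\langle\nabla_{\Gamma_t}h,e_{n+1}\rangle=g^{ij}(\partial_ih)u_j=v^{-2}\,du\cdot dh$ and $\Delta_{\Gamma_t}(v^{-1})=-v^{-2}\Delta_{\Gamma_t}v+2v^{-3}|D_{\Gamma_t}v|^2$; combining these three relations and multiplying through by $-v^2$ gives precisely the displayed identity. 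Alternatively one may verify it by a direct computation, using $\Delta_{\Gamma_t}w=v^{-1}\partial_i(vg^{ij}\partial_jw)$ for scalar functions $w$ on $\Gamma_t$ and $h=-v^{-1}g^{ij}u_{ij}$, and then collecting terms, as in the classical treatments of graphical mean curvature flow.

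The only nontrivial part is this geometric identity, and even there the work is routine bookkeeping with the metric $g^{ij}$, the Christoffel symbols and the Codazzi equations; the one thing to be careful about is keeping the sign and normalization conventions for $\vec n$, $A_t$, $H$ and $h$ consistent throughout. Everything else is elementary, and the transport term enters for free: in \eqref{eq:2.4} it appears only through the normal speed $\psi$, and the substitution $\partial_tu=v\psi$ is exactly what produces the cross term $\left(\frac{du}{v}\cdot dv\right)\frac{\partial_tu}{v}$ on the left-hand side of \eqref{eq:3.2}.
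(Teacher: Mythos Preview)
Your proposal is correct and takes essentially the same route as the paper: both reduce \eqref{eq:3.2} to the transport-free identity $\Delta_{\Gamma_t}v=-du\cdot dh+|A_t|^2v+2v^{-1}|D_{\Gamma_t}v|^2$ by computing $\partial_tv$ via $\partial_tu=v\psi$, and both recognize that identity as $v^2(D_{\Gamma_t}h\cdot e_{n+1})=du\cdot dh$ combined with the Ecker--Huisken formula for $\Delta_{\Gamma_t}v$. The only difference is cosmetic: the paper quotes the Ecker--Huisken identity directly, whereas you sketch its derivation via $\Delta_{\Gamma_t}(v^{-1})=\langle\Delta_{\Gamma_t}\vec n,e_{n+1}\rangle$ and Codazzi.
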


\begin{proof}
 According to Ecker-Huisken~\cite{MR0998603},
 \[
  -\Delta_{\Gamma_t}v+|A_t|^2v
 +\frac{2|D_{\Gamma_t}v|^2}{v}
 -v^2(D_{\Gamma_t}h\cdot \vec{e}_{n+1})=0,
 \]
 where $\vec{e}_{n+1}=(0,\ldots,0,1)$. Because
 \[
 \begin{split}
  v^2(D_{\Gamma_t}h\cdot \vec{e}_{n+1})
  &=v^2(Dh\cdot
  \vec{e}_{n+1}-(Dh\cdot\vec{n})(\vec{n}\cdot\vec{e}_{n+1})) \\
  &=dh\cdot du \\
  &=-d\left(\frac{\partial_tu}{v}\right)\cdot du
  +d(\vec{f}\cdot\vec{n})\cdot du \quad (\because \eqref{eq:1.1})\\
  &=-\partial_tv
  +\left(\frac{du}{v}\cdot dv\right)\frac{\partial_tu}{v}
  +d(\vec{f}\cdot\vec{n})\cdot du, 
 \end{split} 
\]
 we obtain \eqref{eq:3.2}.
\end{proof}

Let
\[
 R:=\frac{1}{\|\text{principal curvature of }\partial\Omega\|_{L^\infty(\partial\Omega)}}.
\]
Because $\partial\Omega$ is smooth and compact, $0<R<\infty$. For
$r<R$, let $N_r$ denote the interior tubular neighborhood of
$\partial\Omega$;
\[
 N_r:=\{x\in\Omega:\dist(x,\partial\Omega)<r\}.
\]
For $x\in N_r$, there uniquely exists $\zeta(x)\in\partial\Omega$ such
that $\dist(x,\partial\Omega)=|x-\zeta(x)|$. Thus, define the reflection
point $\tilde{x}$ with respect to $\partial\Omega$ as
$\tilde{x}=2\zeta(x)-x$ (see Figure \ref{fig:3.1}). We fix a radially
symmetric cut-off function $\eta_R=\eta_R (|X|)\in C^\infty(\R^{n+1})$
such that
\[
 0\leq \eta_R \leq 1,\quad
  \eta'_R\leq 0,\quad
 \supp\eta_R \subset B_{R/8},\quad
 \eta_R =1\ \text{on}\ B_{R/16}.
\]

\begin{figure}
 \centering
 \includegraphics[width=120pt]{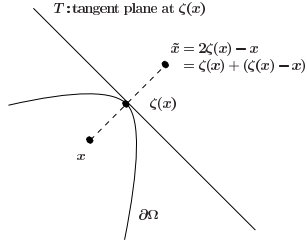}
 \caption{The reflection point of $x\in\Omega\cap N_r$ with respect to
 $\partial\Omega$ is denoted by $\tilde{x}$.}
 \label{fig:3.1}
\end{figure}

For $0<t<s<T_0$ and $X=(x,x_{n+1}),\ Y=(y,y_{n+1})\in N_R\times\R$, we
define the $n$-dimensional backward heat kernel $\rho_{(Y,s)}(X,t)$ and
reflected backward heat kernel $\tilde\rho_{(Y,s)}(X,t)$ as
\begin{equation}
 \begin{split}
  \rho_{(Y,s)}(X,t)
  :=\frac{1}{(4\pi(s-t))^\frac{n}{2}}
  \exp\left(-\frac{|X-Y|^2}{4(s-t)}\right), \\
  \tilde\rho_{(Y,s)}(X,t)
  :=\frac{1}{(4\pi(s-t))^\frac{n}{2}}
  \exp\left(-\frac{|\tilde{X}-Y|^2}{4(s-t)}\right), \\
 \end{split}
\end{equation}
where $\tilde{X}=(\tilde{x},x_{n+1})$. For fixed $0<t<s$ and $X,Y\in
N_R\times\R$, we define a truncated version of $\rho$ and $\tilde\rho$ as
\begin{equation}
 \begin{split}
  \rho_1
  &=\rho_1(X,t)
  :=\eta_R (X-Y)\rho_{(Y,s)}(X,t), \\
  \rho_2
  &=\rho_2(X,t)
  :=\eta_R (\tilde{X}-Y)\tilde\rho_{(Y,s)}(X,t).
 \end{split}
\end{equation}
 
 To derive Huisken's monotonicity formula,
 \begin{equation}
  \label{eq:3.4}
  \frac{(\vec{w}\cdot D\rho)^2}{\rho}
  +((I-\vec{w}\otimes\vec{w}):D^2\rho)
   +\partial_t\rho=0
 \end{equation}
 is the crucial identity, where $\rho=\rho_{(Y,s)}(X,t)$,
 $\vec{w}\in\R^{n+1}$ is any unit vector, $I$ is the identity matrix,
 $\vec{w}\otimes\vec{w}$ is tensor product, and
 $(I-\vec{w}\otimes\vec{w}):D^2\rho$ is
 $\tr((I-\vec{w}\otimes\vec{w})D^2\rho)$. In \cite{MR3348119}, a
 similar identity for the reflected backward heat kernel
 $\tilde\rho_{(Y,s)}$ was obtained.

\begin{lemma}%
 [\cite{MR3348119}]
 \label{lem:3.1}
 There is a constant $\Cl[const]{c-3.8}>0$ depending on $\Omega$
 such that for $\vec{w}=(w_i)\in\R^{n+1}$ with $|\vec{w}|=1$ and
 $\tilde\rho=\tilde\rho_{(Y,s)}(X,t)$,
 \begin{equation}
  \label{eq:3.5}
  \frac{(\vec{w}\cdot D\tilde{\rho})^2}{\tilde{\rho}}
  +((I-\vec{w}\otimes \vec{w}): D^2\tilde{\rho})
  +\partial_t\tilde{\rho} 
  \leq\Cr{c-3.8}\left(
  \frac{|\tilde{X}-Y|}{s-t}
  +\frac{|\tilde{X}-Y|^3}{(s-t)^2}
  \right)
   \tilde{\rho}
 \end{equation}
 for $0<t<s$ and $X,Y\in N_{R/2}\times\R$.
\end{lemma}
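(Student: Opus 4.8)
The idea is to view $\tilde\rho_{(Y,s)}$ as a pullback of the ordinary backward heat kernel $\rho_{(Y,s)}$ and to feed the exact identity \eqref{eq:3.4} back through the chain rule, the point being that reflection across $\partial\Omega$ is an isometry \emph{to first order} along $\partial\Omega$. Fix $s$ and $Y$, and set $\Phi(X):=\tilde X=(2\zeta(x)-x,\,x_{n+1})$ for $X=(x,x_{n+1})\in N_{R/2}\times\R$, so that $\tilde\rho_{(Y,s)}(X,t)=\rho_{(Y,s)}(\Phi(X),t)$. Since $\Phi$ is independent of $t$, we have $\partial_t\tilde\rho=(\partial_t\rho)|_{\Phi(X)}$, and writing $J=J(X):=D_X\Phi$ for the (invertible) Jacobian the chain rule gives
\[
D_X\tilde\rho=J^{T}(D\rho)|_{\Phi(X)},\qquad
D_X^2\tilde\rho=J^{T}(D^2\rho)|_{\Phi(X)}\,J+\sum_k(\partial_k\rho)|_{\Phi(X)}\,D^2\Phi_k ,
\]
where $D^2\Phi_k$ denotes the Hessian of the $k$-th component of $\Phi$ (this vanishes for $k=n+1$).

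The next step is to insert the exact identity \eqref{eq:3.4}, applied at the point $\Phi(X)$ with the unit vector $\vec{v}:=J\vec{w}/|J\vec{w}|$, to replace $\partial_t\tilde\rho=(\partial_t\rho)|_{\Phi(X)}$ in the left-hand side of \eqref{eq:3.5}. After collecting terms — using $\tr(JJ^{T})=\tr(J^{T}J)$ and the explicit expressions $D\rho|_{\Phi(X)}=-\tfrac{\tilde X-Y}{2(s-t)}\tilde\rho$ and $D^2\rho|_{\Phi(X)}=\bigl(-\tfrac{I}{2(s-t)}+\tfrac{(\tilde X-Y)\otimes(\tilde X-Y)}{4(s-t)^2}\bigr)\tilde\rho$ — the ``flat'' part cancels identically (it is precisely \eqref{eq:3.4} at $\Phi(X)$ in the case $J\in O(n+1)$, $D^2\Phi\equiv0$), and one is left with the \emph{exact} formula that the left-hand side of \eqref{eq:3.5} equals
\[
\tilde\rho\left(\frac{\vec{w}^{T}(J^{T}J-I)\vec{w}-\tr(JJ^{T}-I)}{2(s-t)}
+\frac{(\tilde X-Y)^{T}(JJ^{T}-I)(\tilde X-Y)}{4(s-t)^2}
-\frac{\sum_k(\tilde X-Y)_k\bigl((I-\vec{w}\otimes\vec{w}):D^2\Phi_k\bigr)}{2(s-t)}\right).
\]

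It then remains to estimate the geometric factors, and this is where the dependence on $\Omega$ enters. Writing $d:=\dist(\cdot,\partial\Omega)$, one has $\zeta(x)=x-d(x)\,\nabla d(x)$ on $N_R$, hence the $n\times n$ Jacobian of $x\mapsto 2\zeta(x)-x$ equals $\bigl(I-2\,\nabla d\otimes\nabla d\bigr)-2d\,D^2d$; the first matrix is a Householder reflection (an orthogonal involution), while $\|D^2d\|\le 2/R$ on $N_{R/2}$, so $\|J^{T}J-I\|+\|JJ^{T}-I\|\le C_\Omega\,d(x)$, and one further differentiation gives $\|D^2\Phi\|\le C_\Omega$. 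Finally, since $x\in\Omega$ whereas its reflection $\tilde x$ lies on the opposite side of $\partial\Omega$, the segment $[y,\tilde x]$ meets $\partial\Omega$, whence $|\tilde X-Y|\ge|\tilde x-y|\ge\dist(\tilde x,\partial\Omega)=d(x)$. Substituting $d(x)\le|\tilde X-Y|$ and the above bounds into the displayed formula yields \eqref{eq:3.5} with $\Cr{c-3.8}$ depending only on $n$ and $\Omega$.

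The main obstacle is precisely this last paragraph: the first-order isometry estimate $\|J^{T}J-I\|+\|JJ^{T}-I\|\le C_\Omega\,d(x)$ — the reflection distorts the Euclidean metric by no more than the distance to $\partial\Omega$ — together with the elementary but crucial inequality $d(x)\le|\tilde X-Y|$, which is what converts the distance-to-boundary weights coming out of the chain rule into the stated $|\tilde X-Y|$ weights. These are the computations carried out in \cite{MR3348119}.
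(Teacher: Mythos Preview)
Your proof is correct and is essentially the same computation as the paper's, just repackaged: the paper differentiates $|\tilde X-Y|^2$ directly using $D\zeta=(I-\vec\nu\otimes\vec\nu)+Q$ with $|Q|\le 2\,d(x)$ and then substitutes into the explicit formulas for $\partial_t\tilde\rho$, $D\tilde\rho$, $D^2\tilde\rho$, whereas you pull back through $\Phi$ and arrive at the same error terms written as $J^TJ-I$, $JJ^T-I$, $D^2\Phi$. The one substantive difference is the inequality $d(x)\le|\tilde X-Y|$: the paper obtains it from $|X-Y|\le|\tilde X-Y|$ (which uses convexity of $\Omega$), while your argument via the segment $[y,\tilde x]$ crossing $\partial\Omega$ avoids convexity and so matches the hypotheses of the lemma more cleanly.
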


To prove Lemma \ref{lem:3.1}, we use the following lemma.
\begin{lemma}
 [cf.\cite{MR0397520,MR0863638}]
 Let 
 \[
 Q(X):=D\zeta(X)-(I-\vec{\nu}\otimes\vec{\nu}),
 \]
 where $\vec\nu$ is the unit normal vector at
 $\zeta(X)\in\partial\Omega$. Then
 \begin{enumerate}
  \item $Q(X)$ is symmetric;
  \item $Q(X)\vec{\nu}=\vec{0}$ for all $X\in N_{R/2}\times\R$;
  \item $Q(X)\vec{e}_{n+1}=\vec{0}$ for all $X\in N_{R/2}\times\R$, where $\vec{e}_{n+1}=(0,\ldots,0,1)$;
  \item $|Q(X)|\leq 2|X-\zeta(X)|$ for all $X\in N_{R/2}\times\R$;
  \item If $\partial\Omega\in C^3$, then $|D Q|$ is bounded.
 \end{enumerate}
\end{lemma}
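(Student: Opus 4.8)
The plan is to treat each of the five properties by working in the tubular coordinates near $\partial\Omega$ and exploiting that the nearest-point projection $\zeta$ factors through the boundary. First I would set up the local geometry: for $X\in N_{R/2}\times\R$ write $X=\zeta(X)+d(X)\,\vec{\nu}(\zeta(X))$, where $d(X)=\dist(X,\partial(\Omega\times\R))$ and $\vec\nu$ is the outer unit normal at the foot point $\zeta(X)$. Differentiating the identity $X-\zeta(X)=d(X)\vec\nu(\zeta(X))$ gives $I-D\zeta(X)=\vec\nu\otimes Dd(X)+d(X)\,D(\vec\nu\circ\zeta)(X)$; since $Dd(X)=\vec\nu$ in this neighborhood, this reads $Q(X)=-d(X)\,D(\vec\nu\circ\zeta)(X)$ up to the projection terms, which already makes (4) plausible: $\|D(\vec\nu\circ\zeta)\|$ is controlled by the second fundamental form of $\partial\Omega$ (whose norm is $1/R$) composed with $D\zeta$, whose operator norm on $N_{R/2}$ is at most $2$ by the standard tube estimate, so $|Q(X)|\le 2 d(X)=2|X-\zeta(X)|$.

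For (1), symmetry, the cleanest route is to use the signed distance function $\delta$ to $\partial(\Omega\times\R)$ (negative inside), which is $C^\infty$ on $N_{R/2}\times\R$, and to recall $\vec\nu(\zeta(X))=D\delta(X)$ and $\zeta(X)=X-\delta(X)D\delta(X)$. Then $D\zeta(X)=I-D\delta(X)\otimes D\delta(X)-\delta(X)D^2\delta(X)=(I-\vec\nu\otimes\vec\nu)-\delta(X)D^2\delta(X)$, so $Q(X)=-\delta(X)D^2\delta(X)$, which is manifestly symmetric since $D^2\delta$ is a Hessian; this also re-derives (4) with the Hessian bound $\|D^2\delta\|\le C/R$ on $N_{R/2}$. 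For (2) and (3), I would use that $Q(X)=-\delta(X)D^2\delta(X)$ and that $|D\delta|\equiv1$ forces $D^2\delta\cdot D\delta=0$, so $Q(X)\vec\nu=Q(X)D\delta(X)=0$; and since the cylinder $\Omega\times\R$ is translation-invariant in the $x_{n+1}$ direction, $\delta$ does not depend on $x_{n+1}$, hence $D^2\delta\cdot\vec e_{n+1}=0$ and $Q(X)\vec e_{n+1}=0$. For (5), boundedness of $|DQ|$ when $\partial\Omega\in C^3$: from $Q=-\delta D^2\delta$ we get $DQ=-D\delta\otimes D^2\delta-\delta\,D^3\delta$, and $C^3$ regularity of $\partial\Omega$ gives $\delta\in C^3(N_{R/2}\times\R)$ with all derivatives up to order three bounded on the compact closure, so $|DQ|$ is bounded.

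The main obstacle, such as it is, is purely bookkeeping: verifying that $\delta$ (equivalently $\zeta$, $\vec\nu$) is as smooth as claimed on the full tubular neighborhood $N_{R/2}\times\R$ and that the constants in the tube estimates genuinely depend only on $R$ (hence on $\Omega$). This is the classical tubular-neighborhood lemma for hypersurfaces — the radius $R$ was defined precisely as the reciprocal of the sup of the principal curvatures so that the normal exponential map is a diffeomorphism on $N_R$ and $D^2\delta$ stays bounded on $N_{R/2}$ — so I would cite the standard references (e.g.\ \cite{MR0397520,MR0863638}) for the estimates $\|D\zeta\|\le2$ and $\|D^2\delta\|\le C/R$ rather than reprove them. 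Everything else is a direct computation from the two identities $Q(X)=-\delta(X)D^2\delta(X)$ and $|D\delta|\equiv1$, together with the $x_{n+1}$-independence of $\delta$.
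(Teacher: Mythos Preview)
The paper does not prove this lemma at all; it simply cites \cite{MR0397520,MR0863638} and moves on. Your approach supplies what the paper omits, and the key identity $Q(X)=-\delta(X)\,D^2\delta(X)$ (obtained from $\zeta(X)=X-\delta(X)D\delta(X)$) is exactly the right organizing principle: it makes (1)--(3) and (5) immediate via the symmetry of the Hessian, the eikonal identity $D^2\delta\cdot D\delta=0$, and the $x_{n+1}$-independence of $\delta$ on the cylinder.

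One quantitative point deserves care. In your first paragraph you bound $\|D(\vec\nu\circ\zeta)\|$ by the product of the second-fundamental-form norm $1/R$ and $\|D\zeta\|\le 2$, which gives $\|D(\vec\nu\circ\zeta)\|\le 2/R$, hence $|Q(X)|\le (2/R)\,|X-\zeta(X)|$ --- not $2\,|X-\zeta(X)|$ as you then write (you dropped the $1/R$). Your second paragraph is consistent with this, stating the bound as $\|D^2\delta\|\le C/R$. In fact the constant $2$ in the paper's item (4) appears to be an imprecision: for a disk of radius $r<1$ one computes $Q(X)=\frac{d}{r-d}(I-\hat x\otimes\hat x)$, so $|Q(X)|/|X-\zeta(X)|=1/(r-d)$ can exceed $2$ on $N_{r/2}$. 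The correct uniform statement is $|Q(X)|\le C(\Omega)\,|X-\zeta(X)|$ (equivalently $\le (2/R)\,|X-\zeta(X)|$), which is what your argument actually yields and is all that is needed downstream, since every constant in Lemma~\ref{lem:3.1} and Lemma~\ref{lem:3.4} already depends on $\Omega$.
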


For $X,Y\in N_{R/2}\times\R$, by convexity
\[
 |X-\zeta(X)|
 =\frac12|X-\tilde{X}|
 \leq\frac12(|X-Y|+|Y-\tilde{X}|)
 \leq|\tilde{X}-Y|,
\]
thus $|Q(X)|\leq 2|\tilde{X}-Y|$.

\begin{proof}
 [Proof of Lemma \ref{lem:3.1}]
 Since $D \zeta(X)=I-\vec{\nu}\otimes\vec{\nu} +Q(X)$ and $\tilde
 X=2\zeta(X)-X$, we have
 \begin{equation}
 \label{eq:3.15}
  \begin{split}
   D|\tilde{X}-Y|^2&=2(I-2\vec{\nu}\otimes\vec{\nu}+2Q(X))(\tilde{X}-Y), \\
   |D|\tilde{X}-Y|^2|^2&=4|(I-2\vec{\nu}\otimes\vec{\nu}+2Q(X))(\tilde{X}-Y)|^2 \\
   &\leq4|\tilde{X}-Y|^2+\Cr{c-3.9}|\tilde{X}-Y|^3, \\
   D_{ij} 
   |\tilde{X}-Y|^2&=2\delta_{ij}-4\sum_{k=1}^n (\partial_{X_j}(\nu_i
   \nu_k)-\partial_{X_j}q_{ik})(\tilde X_k-Y_k) \\
   &\quad+8q_{ij}+8\sum_{k=1}^{n}(q_{ik}q_{jk}-\nu_i\nu_kq_{jk}-\nu_j\nu_kq_{ik}),
  \end{split}
 \end{equation}
 where $Q(X)=(q_{ij})$ and $\Cl[const]{c-3.9}>0$ is some constant
 depending on $\Omega$. By direct calculation, we have
 \begin{equation}
 \label{eq:3.16}
  \begin{split}
   \partial_t\tilde\rho&=\left(\frac{n}{2(s-t)}-\frac{|\tilde{X}-Y|^2}{4(s-t)^2}\right)\tilde\rho, \ \ 
   D\tilde\rho=-\frac{D|\tilde{X}-Y|^2}{{4(s-t)}}\tilde\rho, \\
   D^2\tilde\rho
   &=\left(\frac{D|\tilde{X}-Y|^2\otimes D|\tilde{X}-Y|^2}{16(s-t)^2}
   -\frac{\tr(D^2|\tilde{X}-Y|^2)}{4(s-t)}
   \right)\tilde\rho.
  \end{split}
 \end{equation}
 Using \eqref{eq:3.15} and \eqref{eq:3.16}, we obtain \eqref{eq:3.5}.
\end{proof}

We next prove a weighted boundary monotonicity inequality.

\begin{lemma}
 \label{lem:3.4}
 Let $\phi\in C^1([0,\infty):C^2(\Omega))$ be a non-negative
 function. Then there exist positive numbers $\Cl[const]{c-3.1}$,
 $\Cl[const]{c-3.2}$ and $\Cl[const]{c-3.3}>0$ depending on $n$,
 $\Omega$ such that
 \begin{equation}
  \label{eq:3.10}
  \begin{split}
    \frac{d}{dt}\int_{\Gamma_t}\phi(\rho_1+\rho_2)\,d\mathscr{H}^n 
   &\leq \int_{\Gamma_t}(\rho_1+\rho_2)
   \left(\partial_t\phi-\Delta_{\Gamma_t}\phi-
   \left(d\phi\cdot\frac{du}{v}\right)
   \frac{\partial_tu}{v}\right)\,d\mathscr{H}^n \\
   &\quad
   +\frac14\int_{\Gamma_t}\phi(\rho_1+\rho_2)(\vec{f}\cdot\vec{n})^2
   \,d\mathscr{H}^n \\
   &\quad
   +\Cr{c-3.1}\mathscr{H}^n (\Gamma _t)+\Cr{c-3.2}(s-t)^{-\frac34}\int_{\Gamma_t}\phi\rho_2
   \,d\mathscr{H}^n \\
   &\quad
   +\Cr{c-3.3}\int_{\Gamma_t\cap\supp\rho_2}\phi
   \,d\mathscr{H}^n
   +\int_{\partial\Gamma_t}(\rho_1+\rho_2)(D_{\Gamma_t}\phi\cdot\vec{\nu})
   \,d\mathscr{H}^{n-1}.
  \end{split}
 \end{equation}
\end{lemma}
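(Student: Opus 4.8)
Here is how I would prove Lemma~\ref{lem:3.4}.

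The plan is to carry out the weighted Huisken monotonicity computation (in the spirit of Ecker~\cite{MR2024995}, Buckland~\cite{MR2180601} and Takasao~\cite{MR3058702}) for the weights $\psi=\phi\rho_1$ and $\psi=\phi\rho_2$, refined so as to absorb the transport term $\vec f$ and the extra boundary contribution carried by the reflected kernel $\rho_2$. Since $\Gamma_t$ is the graph of $u(\cdot,t)$ over the \emph{fixed} domain $\Omega$ and, by the Neumann condition $du\cdot\nu=0$, the normal velocity $\vec V:=\frac{\partial_tu}{v}\vec n=\vec H+(\vec f\cdot\vec n)\vec n$ is tangent to $\partial(\Omega\times\R)$ along $\partial\Gamma_t$, the first variation formula for the evolving surface gives, for $\psi=\psi(X,t)$,
\[
 \frac{d}{dt}\int_{\Gamma_t}\psi\,d\mathscr H^n
 =\int_{\Gamma_t}\bigl(\partial_t\psi+D\psi\cdot\vec V-\psi\,\vec H\cdot\vec V\bigr)\,d\mathscr H^n ,
\]
where $\partial_t\psi$ is taken at fixed $X$, $\vec H$ is the mean curvature vector, the term $-\psi\,\vec H\cdot\vec V$ records the rate of change of $d\mathscr H^n$ (recall $\vec V$ is normal), and no $\partial\Gamma_t$-term appears because $\vec V\cdot\vec\nu=0$ there. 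Applying this with $\psi=\phi\rho_i$ and expanding by the Leibniz rule, I would separate, for each $i$, the ``$\rho_i$ times derivatives of $\phi$'' part from the ``$\phi$ times derivatives of $\rho_i$'' part; extending $\phi$ to be independent of $x_{n+1}$ one gets $\rho_i\,D\phi\cdot\vec V=-\rho_i\bigl(d\phi\cdot\frac{du}{v}\bigr)\frac{\partial_tu}{v}$, so the first part already produces $\rho_i\bigl(\partial_t\phi-(d\phi\cdot\frac{du}{v})\frac{\partial_tu}{v}\bigr)$, and the missing $-\Delta_{\Gamma_t}\phi$ in \eqref{eq:3.10} will come from an integration by parts in the second part.

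For the second part with $i=1$, I insert Huisken's identity~\eqref{eq:3.4} with $\vec w=\vec n$, using $(I-\vec n\otimes\vec n):D^2\rho=\Delta_{\Gamma_t}\rho-\vec H\cdot D\rho$ and $(\vec n\cdot D\rho)^2=|D\rho|^2-|D_{\Gamma_t}\rho|^2$. A direct rearrangement turns the mean-curvature part $\partial_t\rho+D\rho\cdot\vec H-\rho|\vec H|^2$ into $-\Delta_{\Gamma_t}\rho-\rho\bigl|\vec H+\tfrac{(X-Y)^\perp}{2(s-t)}\bigr|^2$, and the transport part $(\vec f\cdot\vec n)(D\rho\cdot\vec n)-\rho(\vec f\cdot\vec n)(\vec H\cdot\vec n)$ is absorbed into this square by completing it, leaving exactly $\tfrac14\rho(\vec f\cdot\vec n)^2$. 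Multiplying by $\phi$, integrating over $\Gamma_t$, discarding the nonpositive square, and using
\[
 \int_{\Gamma_t}\phi(-\Delta_{\Gamma_t}\rho_1)\,d\mathscr H^n
 =-\int_{\Gamma_t}\rho_1\Delta_{\Gamma_t}\phi\,d\mathscr H^n
 +\int_{\partial\Gamma_t}\bigl(\rho_1 D_{\Gamma_t}\phi\cdot\vec\nu-\phi\,D_{\Gamma_t}\rho_1\cdot\vec\nu\bigr)\,d\mathscr H^{n-1}
\]
produces the $-\Delta_{\Gamma_t}\phi$ term, the $\tfrac14\phi\rho_1(\vec f\cdot\vec n)^2$ term, and boundary integrals. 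The terms in which a derivative lands on the cut-off $\eta_R(X-Y)$ are supported in the annulus $\{R/16\leq|X-Y|\leq R/8\}$, on which $\rho$, $D\rho$, $D^2\rho$ are bounded by $C=C(n,R)$; the only delicate one, $\rho\,\vec H\cdot D\eta_R$, is handled by Young's inequality and the bound $|D\eta_R|^2\leq C\eta_R$ (valid for nonnegative smooth $\eta_R$) against a fraction of the discarded square, so that all cut-off errors are $\leq\Cr{c-3.1}\mathscr H^n(\Gamma_t)$.

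The computation for $i=2$ is identical, except that the reflected kernel does not satisfy~\eqref{eq:3.4}; using Lemma~\ref{lem:3.1} in its place leaves, after the same rearrangement and completion of the square, an additional nonnegative term bounded by $\Cr{c-3.8}\bigl(\tfrac{|\tilde X-Y|}{s-t}+\tfrac{|\tilde X-Y|^3}{(s-t)^2}\bigr)\rho_2$. Writing $r:=|\tilde X-Y|$ and splitting $\Gamma_t\cap\supp\rho_2$ according to whether $r\leq(s-t)^{\beta}$ or $r>(s-t)^{\beta}$ with $\beta$ fixed in $[\tfrac5{12},\tfrac12)$: on $\{r\leq(s-t)^{\beta}\}$ the elementary inequality $r^k e^{-r^2/(4(s-t))}\leq C(s-t)^{k/2}$ gives $\bigl(\tfrac{r}{s-t}+\tfrac{r^3}{(s-t)^2}\bigr)\rho_2\leq C(s-t)^{-3/4}\rho_2$, which yields the $\Cr{c-3.2}(s-t)^{-3/4}\int_{\Gamma_t}\phi\rho_2$ term; on $\{r>(s-t)^{\beta}\}$ the factor $e^{-(s-t)^{2\beta-1}/4}$ dominates every negative power of $s-t$, so both $\tilde\rho$ and the whole bracket are $\leq C(n,R)$ there, which yields the $\Cr{c-3.3}\int_{\Gamma_t\cap\supp\rho_2}\phi$ term. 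The cut-off errors from $\eta_R(\tilde X-Y)$ are handled exactly as for $i=1$.

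Adding the $i=1$ and $i=2$ contributions, the boundary integrals combine to $\int_{\partial\Gamma_t}\bigl[(\rho_1+\rho_2)D_{\Gamma_t}\phi\cdot\vec\nu-\phi\,D_{\Gamma_t}(\rho_1+\rho_2)\cdot\vec\nu\bigr]$. Since $\vec n\cdot\vec\nu=0$ on $\partial\Gamma_t$ (Neumann condition), $D_{\Gamma_t}(\rho_1+\rho_2)\cdot\vec\nu=\partial_\nu(\rho_1+\rho_2)$; and because the reflection $\tilde X=2\zeta(X)-X$ fixes $\partial\Omega$ and reverses $\nu$ there, one has $\rho=\tilde\rho$, $\eta_R(X-Y)=\eta_R(\tilde X-Y)$ and $\partial_\nu\tilde\rho=-\partial_\nu\rho$ on $\partial\Omega$, whence $\partial_\nu(\rho_1+\rho_2)=0$ on $\partial\Gamma_t$. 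Thus only $\int_{\partial\Gamma_t}(\rho_1+\rho_2)D_{\Gamma_t}\phi\cdot\vec\nu$ remains and, collecting all terms, \eqref{eq:3.10} follows. The routine part is the weighted monotonicity identity itself (the first two paragraphs with the cut-offs switched off); the main obstacle is to check that the defect of the reflected kernel in Lemma~\ref{lem:3.1}, together with the cut-off errors, is absorbed into \emph{exactly} the stated error terms $\Cr{c-3.1}\mathscr H^n(\Gamma_t)+\Cr{c-3.2}(s-t)^{-3/4}\int_{\Gamma_t}\phi\rho_2+\Cr{c-3.3}\int_{\Gamma_t\cap\supp\rho_2}\phi$ — which requires the case split on $r$ versus a power of $s-t$ together with the Gaussian estimates above — and that the transport term is genuinely nonpositive up to the harmless $\tfrac14\phi(\rho_1+\rho_2)(\vec f\cdot\vec n)^2$.
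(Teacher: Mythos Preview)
Your proposal is correct and follows essentially the same route as the paper: compute the time derivative of $\int_{\Gamma_t}\phi\rho_i\,d\mathscr H^n$, complete the square to absorb the transport into $-\phi\rho_i\bigl|\vec H-\tfrac{D^\perp\rho_i}{\rho_i}\bigr|^2+\tfrac14\phi\rho_i(\vec f\cdot\vec n)^2$, integrate by parts to produce $-\rho_i\Delta_{\Gamma_t}\phi$ and the boundary term, invoke \eqref{eq:3.4} for $\rho_1$ and Lemma~\ref{lem:3.1} for $\rho_2$, split the reflected-kernel defect according to the size of $|\tilde X-Y|$ versus a power of $s-t$, and kill the second boundary integral via $D(\rho_1+\rho_2)\cdot\vec\nu=0$ on $\partial\Omega$. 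The only cosmetic differences are that the paper pulls everything back to $\Omega$ (writing $d\mathscr H^n=v\,dx$) rather than using the first-variation formula directly, and it uses two separate thresholds $(s-t)^{1/4}$ and $(s-t)^{5/12}$ for the two defect terms where you use a single $\beta\in[5/12,1/2)$; also, the paper applies \eqref{eq:3.4}--\eqref{eq:3.5} to the \emph{truncated} kernels $\rho_1,\rho_2$ (absorbing the cut-off errors into the pointwise constants in \eqref{eq:3.6}--\eqref{eq:3.7}) rather than isolating a separate $\rho\,\vec H\cdot D\eta_R$ term.
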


\begin{proof}
 For $i=1,2$
 \begin{equation}
  \label{eq:3.3}
  \begin{split}
   \frac{d}{dt}\int_{\Gamma_t}\phi\rho_i\,d\mathscr{H}^n
   &=\frac{d}{dt}\int_{\Omega}\phi(x,t)\rho_i(x,u(x,t),t)v(x,t)\,dx \\
   &=\int_{\Omega}
   \Bigl(
   \partial_t\phi(x,t)\rho_i(x,u(x,t),t) \\
   &\qquad
   +\phi(x,t)\partial_{x_{n+1}}\rho_i(x,u(x,t),t)\partial_tu(x,t)
   +\partial_{t}\rho_i(x,u(x,t),t)
   \Bigr)v(x,t)\,dx \\
   &\quad+\int_{\Omega}\phi(x,t)
   \rho_i(x,u(x,t),t)\partial_{t}v(x,t)\,dx \\
   &=\int_{\Gamma_t}(\partial_t\phi\rho_i
   +\phi\partial_t\rho_i
   +\phi\partial_{x_{n+1}}\rho_i\partial_tu)\,
   d\mathscr{H}^n \\
   &\quad+\int_{\Omega}\phi(x,t)\rho_i(x,u(x,t),t)\frac{du(x,t)\cdot d(\partial_tu(x,t))}{v(x,t)}\,dx. \\
  \end{split}
 \end{equation}

 We consider the last term of the equation
 \eqref{eq:3.3}. Using integration by parts, we obtain
 \[
 \begin{split}
  &\quad\int_{\Omega}\phi(x,t)\rho_i(x,u(x,t),t)
  \frac{du(x,t)\cdot d(\partial_tu(x,t))}{v(x,t)}\,dx \\
  &=-\int_\Omega\Div\left(
  \phi(x,t)\rho_i(x,u(x,t),t)\frac{du(x,t)}{v(x,t)}\right)
  \partial_tu(x,t)\,dx \\
  &=-\int_\Omega
  d\phi(x,t)\cdot\frac{du(x,t)}{v(x,t)}\rho_i(x,u(x,t),t)
  \frac{\partial_tu(x,t)}{v(x,t)}v(x,t)\,dx \\
  &\quad-\int_\Omega
  \phi(x,t)\bigl(d\rho_i(x,u(x,t),t) 
  +\partial_{x_{n+1}}\rho_i(x,u(x,t),t)du(x,t)\bigr)
  \cdot\frac{du(x,t)}{v(x,t)}
  \frac{\partial_tu(x,t)}{v(x,t)}v(x,t)\,dx\\
  &\quad-\int_\Omega
  \phi(x,t)\rho_i(x,u(x,t),t)\Div\left(\frac{du(x,t)}{v(x,t)}\right)
  \frac{\partial_tu(x,t)}{v(x,t)}v(x,t)\,dx \\
  &=-\int_{\Gamma_t}\Bigl(d\phi\cdot\frac{du}{v}\Bigr)
  \rho_i\frac{\partial_tu}{v}\,d\mathscr{H}^n
  -\int_{\Gamma_t}\phi
  \Bigl((d\rho_i+\partial_{x_{n+1}}\rho_idu)\cdot\frac{du}{v}\Bigr)
  \frac{\partial_tu}{v}\,d\mathscr{H}^n \\
  &\quad+\int_{\Gamma_t}\phi\rho_ih\frac{\partial_tu}{v}\,d\mathscr{H}^n.
 \end{split} 
 \]
We note that
 \[
  \begin{split}
   \partial_{{x_{n+1}}}\rho_iv
   -\partial_{{x_{n+1}}}\rho_i\frac{|du|^2}{v}
   -d\rho_i\cdot\frac{du}{v}
   &=\partial_{{x_{n+1}}}\rho_i\frac{1}{v}
   -d\rho_i\cdot\frac{du}{v}=(D\rho\cdot\vec{n}).
  \end{split}
 \]
 Hence, we obtain
 \[
 \begin{split}
  \frac{d}{dt}\int_{\Gamma_t}\phi\rho_i\,d\mathscr{H}^n
  &=\int_{\Gamma_t}\partial_t\phi\rho_i\,d\mathscr{H}^n
  +\int_{\Gamma_t}\phi\partial_t\rho_i\,d\mathscr{H}^n 
  +\int_{\Gamma_t}\phi(D\rho_i\cdot\vec{n})\frac{\partial_tu}{v}
  \,d\mathscr{H}^n \\
  &\quad-\int_{\Gamma_t}\Bigl(d\phi\cdot\frac{du}{v}\Bigr)\rho_i\frac{\partial_tu}{v}
  \,d\mathscr{H}^n
  +\int_{\Gamma_t}\phi\rho_ih\frac{\partial_tu}{v}\,d\mathscr{H}^n.
 \end{split}
 \]

 Using \eqref{eq:1.1} or \eqref{eq:2.4},
 \[
 \begin{split}
  \left(\phi(D\rho_i\cdot\vec{n})
  +\phi\rho_ih\right)\frac{\partial_tu}{v} 
  &=\left(\phi(D\rho_i\cdot\vec{n})
  +\phi\rho_ih\right)(-h+\vec{f}\cdot\vec{n}) \\
  &=\phi(D\rho_i\cdot\vec{H})
  -\phi\rho_i|\vec{H}|^2
  +\phi\rho_i\left(
  \left(\frac{D^\perp\rho_i}{\rho_i}-\vec{H}\right)\cdot\vec{n}\right)
  (\vec{f}\cdot\vec{n}) \\
  &=-\phi\rho_i\left|\vec{H}-\frac{D^\perp\rho_i}{\rho_i}\right|^2
  +\phi\frac{|D^\perp\rho_i|^2}{\rho_i}
  -\phi(D^\perp\rho_i\cdot\vec{H}) \\
  &\quad+\phi\rho_i\left(
  \left(\frac{D^\perp\rho_i}{\rho_i}-\vec{H}\right)\cdot\vec{n}\right)
  (\vec{f}\cdot\vec{n}) \\
  &\leq\phi\frac{|D^\perp\rho_i|^2}{\rho_i}
  -\phi(D^\perp\rho_i\cdot\vec{H})
  +\frac14\phi\rho_i(\vec{f}\cdot\vec{n})^2,
 \end{split}
 \]
where $\vec{H}=-h\vec{n}$ and $D^\perp \rho _i = (D\rho _i \cdot \vec{n}) \vec{n}$ are used. Therefore,
 \[
 \begin{split}
  \frac{d}{dt}\int_{\Gamma_t}\phi\rho_i\,d\mathscr{H}^n
  &\leq\int_{\Gamma_t}\partial_t\phi\rho_i\,d\mathscr{H}^n
  -\int_{\Gamma_t}\Bigl(d\phi\cdot\frac{du}{v}\Bigr)\rho_i\frac{\partial_tu}{v}
  \,d\mathscr{H}^n \\
  &\quad
  +\int_{\Gamma_t}\phi
  \left(\partial_t\rho_i+\frac{|D^\perp\rho_i|^2}{\rho_i}
  -(D^\perp\rho_i\cdot\vec{H})\right)
  \,d\mathscr{H}^n \\
  &\quad
  +\frac14\int_{\Gamma_t}\phi\rho_i(\vec{f}\cdot\vec{n})^2\,d\mathscr{H}^n.
  \end{split}
 \]
According to the divergence theorem on $\Gamma_t$, 
 \[
 \begin{split}
  -\int_{\Gamma_t}\phi(D^\perp\rho_i\cdot\vec{H})\,d\mathscr{H}^n 
  &=-\int_{\Gamma_t}\phi(D\rho_i\cdot\vec{H})\,d\mathscr{H}^n \\
  &=\int_{\Gamma_t}\Div_{\Gamma_t}(\phi D\rho_i)\,d\mathscr{H}^n
  -\int_{\partial\Gamma_t}\phi(D\rho_i\cdot\vec{\nu})\,d\mathscr{H}^{n-1} \\
  &=\int_{\Gamma_t}D_{\Gamma_t}\phi\cdot D\rho_i\,d\mathscr{H}^n
  +\int_{\Gamma_t}\phi
  ((I-\vec{n}\otimes\vec{n}):D^2\rho_i)\,d\mathscr{H}^n \\
  &\quad
  -\int_{\partial\Gamma_t}\phi(D\rho_i\cdot\vec{\nu})\,d\mathscr{H}^{n-1} \\
  &=-\int_{\Gamma_t}\rho_i\Delta_{\Gamma_t}\phi\,d\mathscr{H}^n
  +\int_{\Gamma_t}\phi
  ((I-\vec{n}\otimes\vec{n}):D^2\rho_i)\,d\mathscr{H}^n \\
  &\quad
  +\int_{\partial\Gamma_t}(\rho_i(D_{\Gamma_t}\phi\cdot\vec{\nu})-\phi(D\rho_i\cdot\vec{\nu}))\,d\mathscr{H}^{n-1}. \\
 \end{split} 
 \]
 Using \eqref{eq:3.4} and \eqref{eq:3.5}, we obtain
 \begin{equation}
  \label{eq:3.6}
  \frac{|D^\perp\rho_1|^2}{\rho_1}
   +((I-\vec{n}\otimes\vec{n}):D^2\rho_1)
   +\partial_t\rho_1\leq \Cr{c-3.4}
 \end{equation}
 and
 \begin{equation}
  \label{eq:3.7}
   \frac{|D^\perp\rho_2|^2}{\rho_2}
   +((I-\vec{n}\otimes\vec{n}):D^2\rho_2)
   +\partial_t\rho_2
   \leq
   \Cr{c-3.5}
    \left(
     \frac{|\tilde{X}-Y|}{s-t}
     +\frac{|\tilde{X}-Y|^3}{(s-t)^2}
    \right)
    \rho_2
    +\Cr{c-3.6}
 \end{equation}
 for some constants $\Cl[const]{c-3.4}$, $\Cl[const]{c-3.5}$,
 $\Cl[const]{c-3.6}>0$ depending on $\Omega$. 

 To compute the integration of \eqref{eq:3.7}, we decompose the
 integration as
 \[
 \begin{split}
  &\quad\int_{\Gamma_t}\phi\frac{\Cr{c-3.5}|\tilde{X}-Y|}{s-t}\rho_2\,d\mathscr{H}^n \\
  &\leq\int_{\Gamma_t\cap\{|\tilde{X}-Y|\leq(s-t)^\frac14\}}
  \phi\frac{\Cr{c-3.5}|\tilde{X}-Y|}{s-t}\rho_2\,d\mathscr{H}^n \\
  &\quad+\int_{\Gamma_t\cap\{|\tilde{X}-Y|\geq(s-t)^\frac14\}}
  \phi\frac{\Cr{c-3.5}|\tilde{X}-Y|}{s-t}\rho_2\,d\mathscr{H}^n \\
  &=:I_1+I_2,
 \end{split} 
 \]
 and
 \[
 \begin{split}
  &\quad\int_{\Gamma_t}\phi\frac{\Cr{c-3.5}|\tilde{X}-Y|^3}{(s-t)^2}\rho_2\,d\mathscr{H}^n \\
  &\leq\int_{\Gamma_t\cap\{|\tilde{X}-Y|\leq(s-t)^\frac{5}{12}\}}
  \phi\frac{\Cr{c-3.5}|\tilde{X}-Y|^3}{(s-t)^2}\rho_2\,d\mathscr{H}^n \\
  &\quad+\int_{\Gamma_t\cap\{|\tilde{X}-Y|\geq(s-t)^\frac{5}{12}\}}
  \phi\frac{\Cr{c-3.5}|\tilde{X}-Y|^3}{(s-t)^2}\rho_2\,d\mathscr{H}^n \\
  &=:I_3+I_4.
 \end{split} 
 \]
 $I_1$ is estimated by
 \begin{equation}
  \label{eq:3.8}
   I_1
   \leq \Cr{c-3.5}(s-t)^{-\frac34}
   \int_{\Gamma_t\cap\{|\tilde{X}-Y|\leq(s-t)^\frac14\}}
   \phi\rho_2\,d\mathscr{H}^n
   \leq \Cr{c-3.5}(s-t)^{-\frac34}\int_{\Gamma_t}\phi\rho_2\,d\mathscr{H}^n.
 \end{equation}
 $I_2$ is estimated by
 \begin{equation}
  \label{eq:3.9}
   I_2
   \leq\frac{\Cr{c-3.5}}{(s-t)^{1+\frac{n}{2}}}e^{-\frac{1}{4\sqrt{s-t}}}
   \int_{\Gamma_t\cap\supp\rho_2}
   \phi|\tilde{X}-Y|\,d\mathscr{H}^n
   \leq\Cr{c-3.7}R
   \int_{\Gamma_t\cap\supp\rho_2}
   \phi
   \,d\mathscr{H}^n
 \end{equation}
 for some constant $\Cl[const]{c-3.7}>0$ depending on $n$ and
 $\Omega$. $I_3$ and $I_4$ are estimated as a similar manner.
 
 Using \eqref{eq:3.8}, \eqref{eq:3.9},  
 $D(\rho_1+\rho_2)\cdot\vec{\nu}\big|_{\partial\Omega}\equiv0$, and $|\tilde{X}-Y|\leq R$ when $X \in \supp{\rho _2}$, we compute
 \[
 \begin{split}
  &\quad\frac{d}{dt}\int_{\Gamma_t}\phi(\rho_1+\rho_2)\,d\mathscr{H}^n \\
  &\leq\int_{\Gamma_t}(\rho_1+\rho_2)
  \left(\partial_t\phi-\Delta_{\Gamma_t}\phi
  -\left(d\phi\cdot\frac{du}{v}\right)\frac{\partial_tu}{v}\right)\,
  d\mathscr{H}^n \\
  &\quad+\frac14\int_{\Gamma_t}(\rho_1+\rho_2)
  \phi(\vec{f}\cdot\vec{n})^2\,d\mathscr{H}^n \\
  &\quad+(\Cr{c-3.4}+\Cr{c-3.6})\mathscr{H}^n(\Gamma_t)
  +2\Cr{c-3.5}(s-t)^{-\frac34}\int_{\Gamma_t}\phi\rho_2\,d\mathscr{H}^n \\
  &\quad+\Cr{c-3.7}(R+R^3)\int_{\Gamma_t\cap\supp\rho_2}
  \phi \,d\mathscr{H}^n 
  +\int_{\partial\Gamma_t}
  (\rho_1+\rho_2)(D_{\Gamma_t}\phi\cdot\vec{\nu})
  \,d\mathscr{H}^{n-1}. 
 \end{split}
\]
 For $\Cr{c-3.1}=\Cr{c-3.4}+\Cr{c-3.6}$, $\Cr{c-3.2}=2\Cr{c-3.5}$,
 and $\Cr{c-3.3}=\Cr{c-3.7}(R+R^3)$,
 we obtain \eqref{eq:3.10}.
\end{proof}

We use the following lemma to handle the boundary integral.

\begin{lemma}
 Let $u$ be a classical solution of \eqref{eq:1.1} and
  $v:=\sqrt{1+|du|^2}$. If $\Omega$ is convex, then
 \begin{equation}
  \label{eq:3.11}
  (D_{\Gamma_t}v\cdot\vec{\nu})|_{\partial(\Omega\times\R)}\leq0
 \end{equation}
 for all $t>0$.
\end{lemma}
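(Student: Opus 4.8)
The plan is to reduce the boundary term $D_{\Gamma_t}v\cdot\vec{\nu}$ on $\partial(\Omega\times\R)$ to an ordinary normal derivative of $v$ on $\partial\Omega$, and then to exploit the Neumann condition together with convexity. \emph{Step 1: the vector $\vec{\nu}=(\nu,0)$ is tangent to $\Gamma_t$ along $\partial\Gamma_t$.} At a point $(x_0,u(x_0,t))$ with $x_0\in\partial\Omega$, the tangent space of $\Gamma_t$ is spanned by $(\vec{e}_i,\partial_{x_i}u)$, $i=1,\dots,n$. Since $du\cdot\nu=0$ on $\partial\Omega$ by \eqref{eq:1.1}, the vector $\sum_{i=1}^n\nu_i(\vec{e}_i,\partial_{x_i}u)=(\nu,\,du\cdot\nu)=(\nu,0)=\vec{\nu}$ lies in this tangent space; this is exactly the right angle condition in \eqref{eq:1.3}. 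Hence $D_{\Gamma_t}v\cdot\vec{\nu}$ is the directional derivative along $\Gamma_t$, in the direction $\vec{\nu}$, of the function $(x,x_{n+1})\mapsto v(x,t)$; as this function does not depend on $x_{n+1}$ and the last component of $\vec{\nu}$ vanishes, $D_{\Gamma_t}v\cdot\vec{\nu}=dv(x,t)\cdot\nu$ on $\partial\Omega$. Thus \eqref{eq:3.11} is equivalent to $\partial_\nu v:=dv\cdot\nu\le0$ on $\partial\Omega$ for all $t>0$.

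\emph{Step 2: rewriting $\partial_\nu v$ via the Neumann condition.} From $v=\sqrt{1+|du|^2}$ one gets $dv=\frac1v(D^2u)\,du$, where $D^2u$ is the Hessian, so $\partial_\nu v=\frac1v\bigl((D^2u)\,du\bigr)\cdot\nu$. The boundary condition $du\cdot\nu=0$ holds along all of $\partial\Omega$, so $du(x_0,t)$ is tangent to $\partial\Omega$; choosing a smooth extension $N$ of $\nu$ to a neighbourhood of $\partial\Omega$ (e.g. via the nearest-point projection) and a curve $\gamma$ in $\partial\Omega$ with $\gamma(0)=x_0$ and $\dot\gamma(0)=du(x_0,t)$, differentiating the identity $du(\gamma(\sigma),t)\cdot N(\gamma(\sigma))\equiv0$ at $\sigma=0$ gives $\bigl((D^2u)\,du\bigr)\cdot\nu+du\cdot\bigl((DN)\,du\bigr)=0$ at $x_0$. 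Therefore
\[
 \partial_\nu v=-\frac1v\,(du)\cdot\bigl((DN)\,du\bigr)=-\frac1v\,\mathrm{II}_{\partial\Omega}(du,du),
\]
where $\mathrm{II}_{\partial\Omega}$ is the second fundamental form of $\partial\Omega$ with respect to the outer unit normal $\nu$ (only the tangential part of $(DN)\,du$ contributes, since it is paired with the tangential vector $du$, so the choice of extension is immaterial).

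\emph{Step 3: convexity.} Since $\Omega$ is convex, $\mathrm{II}_{\partial\Omega}$ is positive semidefinite with respect to the outer unit normal, so $\mathrm{II}_{\partial\Omega}(du,du)\ge0$ and hence $\partial_\nu v\le0$; by Step 1 this is precisely \eqref{eq:3.11}. The computation itself is short and no step is genuinely hard; the only point demanding care is the bookkeeping in Steps 1 and 2, namely confirming that $\vec{\nu}$ really is tangent to $\Gamma_t$ (so that $D_{\Gamma_t}v\cdot\vec{\nu}$ collapses to $dv\cdot\nu$), that $du$ is tangent to $\partial\Omega$ so that the tangential differentiation of $du\cdot N\equiv0$ is legitimate, and that the sign convention for $\mathrm{II}_{\partial\Omega}$ is the one under which ``convex'' means ``positive semidefinite''.
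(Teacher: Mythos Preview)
Your argument is correct and follows essentially the same route as the paper: both reduce $D_{\Gamma_t}v\cdot\vec{\nu}$ to $dv\cdot\nu$ on $\partial\Omega$ using the Neumann condition, and then identify this with a second fundamental form term of $\partial\Omega$ whose sign is controlled by convexity. The paper does Step~1 by directly expanding $D_{\Gamma_t}v=(dv,0)+\tfrac{1}{v^2}(dv\cdot du)(-du,1)$ and using $du\cdot\nu=0$, and in Step~2 simply writes $dv\cdot\nu=\tfrac{1}{2v}\,d|du|^2\cdot\nu=\tfrac{1}{v}B(du,du)$ with $B$ taken in the convention where convexity gives $B\le0$; your more geometric presentation (tangency of $\vec{\nu}$ to $\Gamma_t$, differentiation of $du\cdot N\equiv0$ along $\partial\Omega$) is just a more explicit unpacking of the same computation, with the opposite sign convention for the second fundamental form.
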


\begin{proof}
 Because
 \[
 \begin{split}
  D_{\Gamma_t}v&=Dv-(Dv\cdot\vec{n})\vec{n} \\
  &=(dv,0)+\frac1{v^2}(dv\cdot du)(-du,1),
 \end{split}
 \]
 and boundary condition of $u$,
 \[
  \begin{split}
   (D_{\Gamma_t}v\cdot\vec{\nu})|_{\partial(\Omega\times\R)}
   &=\left((dv\cdot\nu)+\frac{1}{v^2}(dv\cdot du)(-du\cdot\nu)\right)\bigg|_{\partial\Omega} \\
   &=\frac{1}{2v}d|du|^2\cdot\nu|_{\partial\Omega} \\
   &=\frac{1}{v}B(du,du)|_{\partial\Omega}, 
  \end{split}
 \]
 where $B$ is the second fundamental form of $\partial\Omega$. Because
 of the convexity of $\Omega$, $B(du,du)\leq0$.
\end{proof}

Using \eqref{eq:3.2}, \eqref{eq:3.10}, and \eqref{eq:3.11}, monotonicity
of the metric is obtained as follows:
 \begin{proposition}
  \label{prop:3.6}
  Let $u$ be a classical solution of \eqref{eq:1.1} and
  $v:=\sqrt{1+|du|^2}$. Then
  for $Y\in N_{R/4}\times\R$ and $0<t<s$,
 \begin{equation}
  \label{eq:3.12}
  \begin{split}
   &\quad\frac{d}{dt}\int_{\Gamma_t}v(\rho_1+\rho_2)\,d\mathscr{H}^n \\
   &\leq -\int_{\Gamma_t}(\rho_1+\rho_2)
   \left(|A_t|^2v
   +\frac{2|D_{\Gamma_t}v|^2}{v}
   -du\cdot d(\vec{f}\cdot\vec{n})\right)
   \,d\mathscr{H}^n \\
   &\quad
   +\frac14\int_{\Gamma_t}v(\rho_1+\rho_2)(\vec{f}\cdot\vec{n})^2
   \,d\mathscr{H}^n \\
   &\quad
   +\Cr{c-3.1}\mathscr{H}^n(\Gamma _t) +\Cr{c-3.2}(s-t)^{-\frac34}\int_{\Gamma_t}v(\rho_1+\rho_2)
   \,d\mathscr{H}^n \\
   &\quad
   +\Cr{c-3.3}\int_{\Gamma_t\cap\supp\rho_2}v
   \,d\mathscr{H}^n 
  \end{split}
 \end{equation}
  where $\Cr{c-3.1}$, $\Cr{c-3.2}$, $\Cr{c-3.3}$ are constants as in
  Lemma \ref{lem:3.4}.
 \end{proposition}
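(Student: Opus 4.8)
The plan is to combine the PDE identity from Lemma~\ref{lem:3.2} with the weighted boundary monotonicity inequality of Lemma~\ref{lem:3.4}, using $v$ in the role of $\phi$, and then dispose of the boundary term via the convexity estimate \eqref{eq:3.11}. The first step is to check that $v=\sqrt{1+|du|^2}$ is an admissible choice of $\phi$ in Lemma~\ref{lem:3.4}: for a classical solution $u$, $v\in C^1([0,\infty):C^2(\Omega))$ and $v\geq 1>0$, so $\phi=v$ satisfies the hypotheses. Substituting $\phi=v$ into \eqref{eq:3.10} produces exactly the left-hand side $\frac{d}{dt}\int_{\Gamma_t}v(\rho_1+\rho_2)\,d\mathscr{H}^n$, and on the right-hand side the first integrand becomes $(\rho_1+\rho_2)\bigl(\partial_t v-\Delta_{\Gamma_t}v-(dv\cdot\frac{du}{v})\frac{\partial_t u}{v}\bigr)$.

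The second step is to rewrite that combination using Lemma~\ref{lem:3.2}: by \eqref{eq:3.2},
\[
\partial_t v-\Delta_{\Gamma_t}v-\left(\frac{du}{v}\cdot dv\right)\frac{\partial_t u}{v}
=-|A_t|^2 v-\frac{2|D_{\Gamma_t}v|^2}{v}+du\cdot d(\vec{f}\cdot\vec{n}),
\]
so the first term on the right of \eqref{eq:3.10} becomes precisely $-\int_{\Gamma_t}(\rho_1+\rho_2)\bigl(|A_t|^2 v+\frac{2|D_{\Gamma_t}v|^2}{v}-du\cdot d(\vec{f}\cdot\vec{n})\bigr)\,d\mathscr{H}^n$, which is the first line on the right of \eqref{eq:3.12}. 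The $\frac14\int \phi(\rho_1+\rho_2)(\vec{f}\cdot\vec{n})^2$ term carries over verbatim with $\phi=v$, as do the $\Cr{c-3.1}\mathscr{H}^n(\Gamma_t)$ term and the $\Cr{c-3.3}\int_{\Gamma_t\cap\supp\rho_2}v$ term. For the $\Cr{c-3.2}(s-t)^{-3/4}$ term note $\int_{\Gamma_t}\phi\rho_2\leq\int_{\Gamma_t}v(\rho_1+\rho_2)$ since $\rho_1,\rho_2\geq 0$, so this term is absorbed into the stated form with the same constant.

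The third step is the boundary term: Lemma~\ref{lem:3.4} leaves $\int_{\partial\Gamma_t}(\rho_1+\rho_2)(D_{\Gamma_t}\phi\cdot\vec{\nu})\,d\mathscr{H}^{n-1}$, which with $\phi=v$ is $\int_{\partial\Gamma_t}(\rho_1+\rho_2)(D_{\Gamma_t}v\cdot\vec{\nu})\,d\mathscr{H}^{n-1}$. Since $\rho_1+\rho_2\geq 0$ and, by \eqref{eq:3.11}, $(D_{\Gamma_t}v\cdot\vec{\nu})|_{\partial(\Omega\times\R)}\leq 0$ by convexity of $\Omega$, this boundary integral is $\leq 0$ and can simply be dropped. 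Assembling these observations gives \eqref{eq:3.12} with the constants $\Cr{c-3.1},\Cr{c-3.2},\Cr{c-3.3}$ inherited from Lemma~\ref{lem:3.4}. I do not anticipate a serious obstacle here; the only point requiring a little care is the bookkeeping that the reflected-kernel support restriction $Y\in N_{R/4}\times\R$ (rather than $N_{R/2}$) guarantees that the relevant points $X\in\Gamma_t\cap\supp\rho_i$ stay inside the tubular neighborhood where Lemmas~\ref{lem:3.1} and the $Q(X)$ estimates are valid, so that the pointwise bounds \eqref{eq:3.6}–\eqref{eq:3.7} used inside Lemma~\ref{lem:3.4} apply.
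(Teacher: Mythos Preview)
Your proposal is correct and follows exactly the approach the paper intends: the paper simply states that Proposition~\ref{prop:3.6} is obtained by combining \eqref{eq:3.2}, \eqref{eq:3.10}, and \eqref{eq:3.11}, and you have spelled out precisely this combination, including the minor bookkeeping that $\int_{\Gamma_t}v\rho_2\le\int_{\Gamma_t}v(\rho_1+\rho_2)$ and that the boundary term is nonpositive by convexity.
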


\section{Gradient estimates}
\label{sec:4}

We deduce the integral estimates for the transport terms.

\begin{lemma}
 \label{lem:4.1}
 Let $\vec{f}$ be in $ L^q_tL^p_x(0,T_0,\Gamma_t)$ with
 $1-\frac{n}{p}-\frac2q>0$. Let $u$ be a classical solution of
 \eqref{eq:1.1} and $v:=\sqrt{1+|du|^2}$. Let $\eta\in L^\infty(0,T_0)$
 be a nonnegative function.  Then there is a constant
 $\Cl[const]{c-4.1}>0$
 depending
 only on $n$, $p$, $q$ and $T_0$ such that
 \begin{equation}
  \label{eq:4.1}
 \begin{split}
  &\qquad\int_0^\tau \eta\,dt\int_{\Gamma_t}(\rho_1+\rho_2)
  du\cdot d(\vec{f}\cdot\vec{n})
  \,d\mathscr{H}^n \\
  &\quad+\frac14\int_0^\tau \eta\, dt\int_{\Gamma_t}v(\rho_1+\rho_2)(\vec{f}\cdot\vec{n})^2
  \,d\mathscr{H}^n \\
  &\leq \frac12
  \int_0^\tau\eta\,dt
  \int_{\Gamma_t}
  (\rho_1+\rho_2)|A_t|^2v
  \,d\mathscr{H}^n 
  +\int_0^\tau\eta\,dt\int_{\Gamma_t}
  (\rho_1+\rho_2)\frac{|D_{\Gamma_t}v|^2}{v}
  \,d\mathscr{H}^n \\
  &\quad+\Cr{c-4.1}
  \|\eta\|_{L^\infty(0,T_0)}
  \|v\|_{L^\infty(\Omega\times(0,\tau))}^3
  \|\vec{f}\|_{L^q_tL^p_x(0,\tau,\Gamma_t)}
  (1+\|\vec{f}\|_{L^q_tL^p_x(0,\tau,\Gamma_t)})
  \end{split} 
 \end{equation}
 for $0<\tau<s$, where
 \[
 \|\vec{f}\|_{L^q_tL^p_x(0,\tau,\Gamma_t)}
 :=
 \left(
 \int_0^\tau\left(
 \int_{\Gamma_t}|\vec{f}(X,t)|^p\,d\mathscr{H}^n
 \right)^\frac{q}{p}
 dt
 \right)^\frac1q.
 \]
\end{lemma}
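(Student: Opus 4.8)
The plan is to estimate the two transport terms on the left-hand side separately, in each case splitting off a ``good'' part that can be absorbed into the $|A_t|^2 v$ and $|D_{\Gamma_t}v|^2/v$ terms and a ``bad'' part controlled by the $L^q_tL^p_x$ norm of $\vec{f}$. First I would expand $d(\vec{f}\cdot\vec{n})$ using the chain rule: $d(\vec{f}\cdot\vec{n}) = (D\vec{f})^{\!\top}\vec{n} + (D\vec{n})^{\!\top}\vec{f}$ in an appropriate sense along $\Gamma_t$, and observe that the derivatives of $\vec{n}$ are controlled by the second fundamental form $A_t$ (indeed $|D_{\Gamma_t}\vec{n}| = |A_t|$), while the tangential derivatives of $du$ appearing implicitly are controlled by $|D_{\Gamma_t}v|$ together with $v$ (via the identity $|D_{\Gamma_t}v|^2/v$ bounding the relevant curvature-type quantity, as in Lemma \ref{lem:3.2}). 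This lets me bound the integrand $(\rho_1+\rho_2)\, du\cdot d(\vec{f}\cdot\vec{n})$ pointwise by a constant times $(\rho_1+\rho_2)\big(|A_t|\,v + |D_{\Gamma_t}v|\big)\,|\vec{f}|\,v^{?}$ plus lower-order terms; the key point is that only first derivatives of $\vec{f}$ enter linearly and they come paired with $\vec{n}$, so after using $|D_{\Gamma_t}\vec{n}|=|A_t|$ no derivative of $\vec{f}$ survives — $d(\vec{f}\cdot\vec{n})$ is genuinely only $\vec f$ times curvature plus $D\vec f$ contracted with $\vec n$, and the $D\vec f$ piece is handled by integration by parts on $\Gamma_t$, moving the derivative onto $\rho_i$ and $v$ (this is where the factors of $v$ and the reflected-kernel estimates \eqref{eq:3.6}–\eqref{eq:3.7} re-enter).

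Next, for the quadratic term $\tfrac14\int_{\Gamma_t} v(\rho_1+\rho_2)(\vec{f}\cdot\vec{n})^2$ and the ``bad'' parts of the first term, the strategy is a Cauchy–Schwarz / Young split of the form $ab \le \tfrac12\delta a^2 + \tfrac1{2\delta}b^2$ applied on $\Gamma_t$ with weight $(\rho_1+\rho_2)$, choosing $a$ to carry $|A_t|v$ or $|D_{\Gamma_t}v|/\sqrt v$ so that the $a^2$ terms land exactly as the two negative terms on the right-hand side of \eqref{eq:4.1} with coefficients $\tfrac12$ and $1$ respectively, and $b^2$ carries $v^{2}(\vec f\cdot\vec n)^2 \le v^2|\vec f|^2$ (plus a pure $v(\vec f\cdot\vec n)^2$ term from the quadratic piece, which is already in the desired form). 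So after these splittings the remaining terms to control are all of the shape $\int_0^\tau \eta\,dt\int_{\Gamma_t}(\rho_1+\rho_2)\,v^{k}|\vec f|^{2}\,d\mathscr H^n$ for $k\le 2$ (and a lower-power variant for the linear leftover).

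The main work — and the step I expect to be the real obstacle — is then estimating $\int_0^\tau \eta\,dt\int_{\Gamma_t}(\rho_1+\rho_2)\,v^{2}|\vec f|^{2}\,d\mathscr H^n$ by $\|v\|_\infty^3\|\vec f\|_{L^q_tL^p_x}(1+\|\vec f\|_{L^q_tL^p_x})$. I would pull out $\|v\|_{L^\infty(\Omega\times(0,\tau))}^{2}$ and $\|\eta\|_{L^\infty}$, leaving $\int_0^\tau dt\int_{\Gamma_t}(\rho_1+\rho_2)|\vec f|^2\,d\mathscr H^n$; then apply Hölder in the space variable on $\Gamma_t$ with exponents $p/2$ and $(p/2)'$ to get $\big(\int_{\Gamma_t}|\vec f|^p\big)^{2/p}\big(\int_{\Gamma_t}(\rho_1+\rho_2)^{(p/2)'}\big)^{1/(p/2)'}$. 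The Gaussian (and reflected-Gaussian) integral $\int_{\Gamma_t}(\rho_1+\rho_2)^{r}\,d\mathscr H^n$ for $r>1$ is, since $\Gamma_t$ is an $n$-graph, comparable to $\int_{\R^n}(4\pi(s-t))^{-rn/2}e^{-r|x-y|^2/4(s-t)}\,dx \sim C(s-t)^{-n(r-1)/2}$, uniformly in $s-t$ up to the relevant power; the reflected kernel contributes the same power of $s-t$. Raising to the power $1/r=2/(pr') \cdot (\text{stuff})$ one gets a factor $(s-t)^{-\frac n2(1-\frac2p)\cdot\frac12}$, i.e. $(s-t)^{-n/(2p)}$ up to constants — here one must keep careful track that the exponent is exactly $-n/(2p)$ after the algebra with conjugate exponents. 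Finally apply Hölder in time with exponents $q/2$ and $(q/2)'$: the $\big(\int_{\Gamma_t}|\vec f|^p\big)^{2/p}$ factor integrates to $\|\vec f\|_{L^q_tL^p_x(0,\tau,\Gamma_t)}^2$, while $(s-t)^{-n/(2p)}$ must be in $L^{(q/2)'}(0,\tau)$, which is exactly the requirement $\frac n{2p}\cdot(q/2)' < 1$, equivalent after rearrangement to $\frac np+\frac2q<1$ — this is the precise point where the scaling-critical hypothesis is used, and getting the exponent bookkeeping right is the crux. The linear leftover term, of the form $\int(\rho_1+\rho_2)v|\vec f|$, is handled the same way but with a single power of $|\vec f|$, producing the $\|\vec f\|(1+\|\vec f\|)$ structure (one power from this term, two from the quadratic one). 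Collecting all constants, which depend only on $n,p,q,T_0$ (the $T_0$ entering through $\tau<T_0$ when bounding $\int_0^\tau(s-t)^{-n/(2p)\cdot(q/2)'}$), gives \eqref{eq:4.1}.
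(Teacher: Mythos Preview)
Your overall strategy---remove the derivative from $\vec f$ by integration by parts, split via Young's inequality to land exactly the $\tfrac12|A_t|^2v$ and $|D_{\Gamma_t}v|^2/v$ terms plus weighted $|\vec f|^2$ remainders, then control the remainders by H\"older in space and time against Gaussian integrals of $\rho_1+\rho_2$---is correct and is what the paper does; your H\"older bookkeeping on the quadratic piece matches the paper's exactly. The difference lies in how the first step is executed. Rather than expanding $d(\vec f\cdot\vec n)$ by the product rule and integrating by parts only on the $D\vec f$ contribution, the paper integrates by parts in $\Omega$ on the whole term at once: writing $\int_{\Gamma_t}\bar\rho\,du\cdot d(\vec f\cdot\vec n)\,d\mathscr H^n=\int_\Omega\bar\rho v\,du\cdot d(\vec f\cdot\vec n)\,dx$ and moving $d$ off $(\vec f\cdot\vec n)$, then substituting $\Delta u=-vh+v^{-1}(du\cdot dv)$, yields the clean decomposition $I_1+I_2+I_3$ with $I_1=\int\bar\rho\,vh(\vec f\cdot\vec n)$, $I_2=-2\int\bar\rho(\tfrac{du}{v}\cdot dv)(\vec f\cdot\vec n)$, $I_3=-\int(du\cdot d(\bar\rho(x,u,t)))(\vec f\cdot\vec n)$. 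Young on $I_1$ (using $h^2\le n|A_t|^2$) and on $I_2$ (using $|dv|^2/v^2\le|D_{\Gamma_t}v|^2$) produces precisely the two good terms with the stated coefficients; the bad parts are $\int\bar\rho\,v^3(\vec f\cdot\vec n)^2$ and the linear $I_3$. Your chain-rule route would eventually reproduce the same three types of terms, but requires tracking $du\cdot d\vec n$ where $d$ is the $\Omega$-gradient (so $|d^2u|\lesssim|A_t|v^3$, not $|A_t|v$, enters), whereas the paper's single integration by parts sidesteps this.

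Two imprecisions to flag. First, after integration by parts the term where the derivative lands on $\bar\rho$ is of the form $\int_{\Gamma_t}v^2|D\bar\rho|\,|\vec f\cdot\vec n|\,d\mathscr H^n$, not $\int\bar\rho\,v|\vec f|$; one uses the direct bound $|D\bar\rho|\le C(s-t)^{-(n+1)/2}\exp(-|X-Y|^2/8(s-t))$ (together with $|\tilde X-Y|\ge|X-Y|$ from convexity of $\Omega$) rather than \eqref{eq:3.6}--\eqref{eq:3.7}, which concern a different combination. It is this extra $(s-t)^{-1/2}$ that makes the same condition $\tfrac np+\tfrac2q<1$ govern the single-$|\vec f|$ term after H\"older with conjugate exponents $p,p'$ and $q,q'$. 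Second, the sentence ``after using $|D_{\Gamma_t}\vec n|=|A_t|$ no derivative of $\vec f$ survives'' is not right as written: that identity controls only the $D\vec n$ piece, and the $D\vec f$ piece must genuinely be removed by the integration by parts, as you say a line later.
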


\begin{proof}
 For simplicity, set $\bar{\rho}:=\rho_1+\rho_2$. Then
 \[
 \begin{split}
  &\quad\int_{\Gamma_t}(\rho_1+\rho_2)
  (du\cdot d(\vec{f}\cdot\vec{n}))
  \,d\mathscr{H}^n \\
  &=\int_{\Omega}\bar{\rho}
  (du\cdot d(\vec{f}\cdot\vec{n}))
  v\,dx \\
  &=-\int_{\Omega}
  (\bar{\rho}\Delta uv+(du\cdot d(\bar{\rho}(x,u,t)))v+\bar{\rho}(du\cdot dv))
  (\vec{f}\cdot\vec{n})\,dx \\
  &=-\int_{\Gamma_t}
  \biggl(\bar{\rho}\Delta u+(du\cdot d(\bar{\rho}(x,u,t)))
  +\bar{\rho}\Bigl(\frac{du}{v}\cdot dv\Bigr)\biggr)
  (\vec{f}\cdot\vec{n})\,d\mathscr{H}^n.
 \end{split}
 \]
 Here
 \[
  h=-\Div\left(\frac{du}{v}\right)=-\frac1{v}\Delta
 u+\frac{1}{v^2}(du\cdot dv);
 \]
 hence,
 \[
 \begin{split}
  \int_{\Gamma_t}\bar{\rho}
  du\cdot d(\vec{f}\cdot\vec{n})
  \,d\mathscr{H}^n
  &=\int_{\Gamma_t}\bar{\rho} vh(\vec{f}\cdot\vec{n})\,d\mathscr{H}^n \\
  &\quad-2\int_{\Gamma_t}\bar{\rho} 
  \Bigl(\frac{du}{v}\cdot dv\Bigr)
  (\vec{f}\cdot\vec{n})\,d\mathscr{H}^n \\
  &\quad-\int_{\Gamma_t}(du\cdot d(\bar{\rho}(x,u,t)))
  (\vec{f}\cdot\vec{n})\,d\mathscr{H}^n \\
  &=:I_1+I_2+I_3.
 \end{split}
 \]

 $I_1$ is estimated by
 \begin{equation}
  \label{eq:4.2}
  \begin{split}
   |I_1|
   &\leq\frac1{2n}\int_{\Gamma_t}\bar{\rho} h^2v\,d\mathscr{H}^n
   +\frac{n}{2}\int_{\Gamma_t}\bar{\rho}
   v(\vec{f}\cdot\vec{n})^2\,d\mathscr{H}^n \\
   &\leq\frac1{2}\int_{\Gamma_t}\bar{\rho} |A_t|^2v\,d\mathscr{H}^n
   +\frac{n}{2}\int_{\Gamma_t}\bar{\rho}
   v(\vec{f}\cdot\vec{n})^2\,d\mathscr{H}^n 
  \end{split} 
 \end{equation}
because $h^2\leq n|A_t|^2$.

 Note that $D_{\Gamma_t}v=Dv-(Dv\cdot\vec{n})\vec{n}$,
 \[
 \begin{split}
  |D_{\Gamma_t}v|^2
  &=|Dv|^2-(Dv\cdot\vec{n})^2 \\
  &=|dv|^2-\frac{1}{v^2}(du\cdot dv)^2 \quad (\because Dv=(dv,0))\\
  &\geq|dv|^2-\frac{1}{v^2}|du|^2|dv|^2 \\
  &=\frac1{v^2}|dv|^2.
 \end{split} 
 \]
Therefore,
 \begin{equation}
  \label{eq:4.3}
  \begin{split}
   |I_2|
   &\leq\int_{\Gamma_t}\bar{\rho}
   \frac{|du|^2|dv|^2}{v^5}\,d\mathscr{H}^n
   +\int_{\Gamma_t}\bar{\rho} v^3(\vec{f}\cdot\vec{n})^2\,d\mathscr{H}^n \\
   &\leq\int_{\Gamma_t}\bar{\rho}
   \frac{|D_{\Gamma_t}v|^2}{v}\,d\mathscr{H}^n
   +\int_{\Gamma_t}\bar{\rho} v^3(\vec{f}\cdot\vec{n})^2\,d\mathscr{H}^n. \\
  \end{split}
 \end{equation}

 In the following, we derive the integral estimates for the transport terms.
 Using the H\"older inequality,
 \[
 \begin{split}
  &\quad\left|\int_0^\tau\eta\,dt\int_{\Gamma_t}
  \bar{\rho}(\vec{f}\cdot\vec{n})^2v^3
  \,d\mathscr{H}^n \right| \\
  &\leq\|\eta\|_{L^\infty(0,T_0)}
  \|v\|_{L^\infty(\Omega\times(0,\tau))}^3
  \left(\int_0^s\,dt
  \left(\int_{\Gamma_t}\bar{\rho}^{p'}\,d\mathscr{H}^n\right)^\frac{q'}{p'}
  \right)^\frac1{q'}
  \|\vec{f}\|^2_{L^q_tL^p_x(0,\tau,\Gamma_t)},
 \end{split}
 \]
 where $\frac2p+\frac1{p'}=1$ and $\frac2q+\frac1{q'}=1$.
 Using the convexity of $\Omega$,
 $|\tilde{X}-Y|\geq|X-Y|$; hence, 
 \[
 \begin{split}
  \int_{\Gamma_t}\bar{\rho}^{p'}\,d\mathscr{H}^n
  &\leq \frac{2^{p'-1}}{(4\pi(s-t))^{\frac{np'}{2}}}
  \int_{\Gamma_t}\exp\left(-\frac{p'|X-Y|^2}{4(s-t)}\right)\,d\mathscr{H}^n \\
  &\leq \Cr{c-4.2}(s-t)^{-\frac{np'}{2}+\frac{n}{2}},
 \end{split} 
 \]
 where $\Cl[const]{c-4.2}>0$ is some constant depending only on $n$ and
 $p$. Therefore,
 \[
 \left(\int_0^s\,dt
 \left(\int_{\Gamma_t}\bar{\rho}^{p'}\,d\mathscr{H}^n\right)^\frac{q'}{p'}
 \right)^\frac1{q'} <\infty
 \]
 if $-\frac{nq'}2+\frac{nq'}{2p'}>-1$, which provides
 $1-\frac{n}{p}-\frac{2}{q}>0$. Using \eqref{eq:4.2} and \eqref{eq:4.3}, we
 obtain
\begin{equation}
 \label{eq:4.6}
 \begin{split}
  \int_0^\tau\eta(|I_1|+|I_2|)\,dt
  &\leq
  \frac12
  \int_0^\tau\eta\,dt
  \int_{\Gamma_t}
  \bar{\rho}|A_t|^2v
  \,d\mathscr{H}^n \\
  &\quad+\int_0^\tau\eta\,dt\int_{\Gamma_t}
  \bar{\rho}\frac{|D_{\Gamma_t}v|^2}{v}
  \,d\mathscr{H}^n \\
  &\quad+
  \Cr{c-4.3}\|\eta\|_{L^\infty(0,T_0)}
  \|v\|_{L^\infty(\Omega\times(0,\tau))}^3
  \|\vec{f}\|^2_{L^q_tL^p_x(0,\tau,\Gamma_t)}
 \end{split}
\end{equation}
 for a positive constant $\Cl[const]{c-4.3}>0$ depending only on $n$,
 $p$, $q$ and $T_0$.

 Because
 \[
 |du\cdot d(\bar{\rho}(x,u,t))|
 =|du\cdot d\bar{\rho}+|du|^2\bar{\rho}_{x_{n+1}}|
 \leq v^2|D\bar{\rho}|,
 \]
 we obtain 
 \[
 |I_3|\leq\int_{\Gamma_t}v^2|D\bar{\rho}||\vec{f}\cdot\vec{n}|\,d\mathscr{H}^n.
 \]
 Then using the H\"older
 inequality,
\begin{equation}
 \label{eq:4.5}
 \begin{split}
  \int_0^\tau\eta|I_3|\,dt 
  &\leq
  \|\eta\|_{L^\infty(0,T_0)}
  \|v\|_{L^\infty(\Omega\times(0,\tau))}^2\\
  &\quad\times
  \left(\int_0^s\,dt
  \left(\int_{\Gamma_t}|D\bar{\rho}|^{p'}\,d\mathscr{H}^n\right)^\frac{q'}{p'}
  \right)^\frac1{q'}
  \|\vec{f}\|_{L^q_tL^p_x(0,\tau,\Gamma_t)},
 \end{split}
\end{equation}
 where $\frac1p+\frac1{p'}=1$ and $\frac1q+\frac1{q'}=1$.
 Using the convexity of $\Omega$,
 \[
 |D\bar{\rho}|\leq \Cr{c-4.4}\frac{1}{(s-t)^{\frac12+\frac{n}2}}\exp\left(-\frac{|X-Y|^2}{8(s-t)}\right),
 \]
 where $\Cl[const]{c-4.4}>0$ is some constant depending only on $n$ and
 $p$. Therefore,
 \[
 \int_{\Gamma_t}|D\bar{\rho}|^{p'}\,d\mathscr{H}^n
 \leq \Cr{c-4.4}(s-t)^{-\frac{p'}2-\frac{np'}{2}+\frac{n}{2}};
 \]
 hence,
 \[
 \left(\int_0^s\,dt
  \left(\int_{\Gamma_t}|D\bar{\rho}|^{p'}\,d\mathscr{H}^n\right)^\frac{q'}{p'}
 \right)^\frac1{q'}
 <\infty
 \]
 if $-\frac{q'}2-\frac{nq'}{2}+\frac{nq'}{2p'}>-1$, which provides
 $1-\frac{n}p-\frac2q>0$. Therefore, using \eqref{eq:4.5} we obtain
 \begin{equation}
  \label{eq:4.7}
   \int_0^\tau\eta|I_3|\,dt
   \leq \Cr{c-4.6}
   \|\eta\|_{L^\infty(0,T_0)}
  \|v\|_{L^\infty(\Omega\times(0,\tau))}^2
  \|\vec{f}\|_{L^q_tL^p_x(0,\tau,\Gamma_t)}
 \end{equation}
 for some constant $\Cl[const]{c-4.6}>0$ depending only on $n$,
 $p$, $q$ and $T_0$.
 Combining \eqref{eq:4.6} and \eqref{eq:4.7}, we obtain \eqref{eq:4.1}.
\end{proof}

\begin{proof}%
 [Proof of Theorem \ref{thm:1}]
 Let $T\in(0,T_0)$. 
 We denote
 \[
 M_T := \sup _{0<t <T} \| v(\cdot,t) \|
 _{L^\infty(\Omega)}.
 \]
 
 We first consider the interior gradient estimates. By arguments similar
 to that in Proposition \ref{prop:3.6} and Lemma \ref{lem:4.1} with
 $\eta\equiv1$, for $Y=(y,y_{n+1})\in (\Omega\setminus N_{R/6}) \times\R$ and
 $0<\tau <s\leq T$,
 \begin{equation}
  \label{eq:4.8}
  \begin{split}
   &\quad\frac{d}{dt}\int_{\Gamma_t}v\rho_1 \,d\mathscr{H}^n \\
   &\leq -\int_{\Gamma_t} \rho_1
   \left(|A_t|^2v
   +\frac{2|D_{\Gamma_t}v|^2}{v}
   -du\cdot d(\vec{f}\cdot\vec{n})\right)
   \,d\mathscr{H}^n \\
   &\quad
   +\frac14\int_{\Gamma_t}v \rho_1 (\vec{f}\cdot\vec{n})^2
   \,d\mathscr{H}^n
   +\Cr{c-3.1}\mathscr{H}^n(\Gamma _t)
 \end{split}
 \end{equation}
 and
 \begin{equation}
  \label{eq:4.9}
   \begin{split}
    &\ \int_0^\tau\,dt\int_{\Gamma_t}\rho_1
    du\cdot d(\vec{f}\cdot\vec{n})
    \,d\mathscr{H}^n
    +\frac14\int_0^\tau\,dt\int_{\Gamma_t}v\rho_1(\vec{f}\cdot\vec{n})^2
    \,d\mathscr{H}^n \\
    &\leq \frac12
    \int_0^\tau\,dt
    \int_{\Gamma_t}
    \rho_1|A_t|^2v
    \,d\mathscr{H}^n 
    +\int_0^\tau\,dt\int_{\Gamma_t}
    \rho_1\frac{|D_{\Gamma_t}v|^2}{v}
    \,d\mathscr{H}^n \\
    &\quad+\Cr{c-4.1}\|v\|_{L^\infty(\Omega\times(0,\tau))}^3
    \|\vec{f}\|_{L^q_tL^p_x(0,\tau,\Gamma_t)}(1+\|\vec{f}\|_{L^q_tL^p_x(0,\tau,\Gamma_t)})
   \end{split} 
 \end{equation}
 where the positive constants $\Cr{c-3.1}$ and $\Cr{c-4.1}$ are same as
 in Lemma \ref{lem:3.4} and Lemma \ref{lem:4.1} respectively.
Using \eqref{eq:4.8} and \eqref{eq:4.9} we have
 \begin{equation*}
  \begin{split}
   &\quad\int_{\Gamma_\tau}v(x,\tau)\rho_1(X,\tau) \,d\mathscr{H}^n 
   -\int_{\Gamma_0}v(x,0)\rho_1(X,0) \,d\mathscr{H}^n  \\
   &\leq \Cr{c-3.1} \int _ 0 ^{\tau} \int _\Omega v \, dx dt
   +\Cr{c-4.1}\|v\|_{L^\infty(\Omega\times(0,\tau))}^3\|\vec{f}\|_{L^q_tL^p_x(0,\tau,\Gamma_t)}(1+\|\vec{f}\|_{L^q_tL^p_x(0,\tau,\Gamma_t)})
   \\
   &\leq \Cr{c-3.1}|\Omega| M_Ts
   +\Cr{c-4.1}M_T^3\|\vec{f}\|_{L^q_tL^p_x(0,s,\Gamma_t)}(1+\|\vec{f}\|_{L^q_tL^p_x(0,s,\Gamma_t)}).
  \end{split}
 \end{equation*}
 Passing $\tau\to s$ we obtain
 \begin{equation}
  \label{eq:4.11}
\begin{split}
   v(y,s)
   &\leq \int _\Omega v^2 _0 \rho_1 (X,0) \, dx
   +\Cr{c-3.1}|\Omega| M_Ts
   \\
   &\quad+\Cr{c-4.1}M_T^3\|\vec{f}\|_{L^q_tL^p_x(0,s,\Gamma_t)}(1+\|\vec{f}\|_{L^q_tL^p_x(0,s,\Gamma_t)})
   \\
   &\leq \|v_0\|_\infty^2
   +\Cr{c-3.1}|\Omega| M_Ts
   +\Cr{c-4.1}M_T^3\|\vec{f}\|_{L^q_tL^p_x(0,s,\Gamma_t)}
   (1+\|\vec{f}\|_{L^q_tL^p_x(0,s,\Gamma_t)})
  \end{split} 
 \end{equation}
 where $v_0:=\sqrt{1+|du_0|^2}$.
  
 Next we consider the boundary gradient estimates. By Proposition
 \ref{prop:3.6}, for $Y=(y,y_{n+1})\in N_{R/4}\times\R$, $0<\tau<s\leq T$, and
 a non-negative function $\eta=\eta(t)\in L^\infty(0,T_0)$,
 \begin{equation*}
  \begin{split}
   &\quad\eta\frac{d}{dt}\int_{\Gamma_t}v(\rho_1+\rho_2)\,d\mathscr{H}^n \\
   &\leq -\eta\int_{\Gamma_t}(\rho_1+\rho_2)
   \left(|A_t|^2v
   +\frac{2|D_{\Gamma_t}v|^2}{v}
   -du\cdot d(\vec{f}\cdot\vec{n})\right)
   \,d\mathscr{H}^n \\
   &\quad
   +\frac\eta4\int_{\Gamma_t}v(\rho_1+\rho_2)(\vec{f}\cdot\vec{n})^2
   \,d\mathscr{H}^n \\
   &\quad
   +\Cr{c-3.1}\eta\mathscr{H}^n(\Gamma _t)
   +\Cr{c-3.2}\eta(s-t)^{-\frac34}\int_{\Gamma_t}v(\rho_1+\rho_2)
   \,d\mathscr{H}^n \\
   &\quad
   +\Cr{c-3.3}\eta\int_{\Gamma_t\cap\supp\rho_2}v
   \,d\mathscr{H}^n. 
  \end{split}
 \end{equation*}
 Let
 \[
 \eta(\tau)
 :=\exp\left(-\Cr{c-3.2}\int_0^\tau(s-t)^{-\frac34}\,dt\right)
 =\exp\left(-\Cr{c-3.2}(s^\frac14-(s-\tau)^\frac14)\right).
 \]
 Note that $\|\eta\|_\infty=1$. Then by Lemma \ref{lem:4.1}, we have
 \begin{equation*}
  \begin{split}
   &\quad
   \exp\left(-\Cr{c-3.2}(s^\frac14-(s-\tau)^\frac14)\right)
   \int_{\Gamma_\tau}v(x,\tau)(\rho_1+\rho_2)(X,\tau) \,d\mathscr{H}^n \\
   &\qquad-\int_{\Gamma_0}v(x,0)(\rho_1+\rho_2)(X,0) \,d\mathscr{H}^n  \\
   &\leq \Cr{c-3.1} \int_ 0^{\tau}\eta\,dt
   \int _\Omega v \, dx
   +\Cr{c-3.3}\int_0^\tau\eta\,dt\int_{\Omega}v^2\,dx \\
   &\quad+\Cr{c-4.1}\|v\|_{L^\infty(\Omega\times(0,\tau))}^3
   \|\vec{f}\|_{L^q_tL^p_x(0,\tau,\Gamma_t)}
   (1+\|\vec{f}\|_{L^q_tL^p_x(0,\tau,\Gamma_t)})
   \\
   &\leq \Cr{c-3.1}|\Omega| M_Ts
   +\Cr{c-3.3}|\Omega|M_T^2s
   +\Cr{c-4.1}M_T^3\|\vec{f}\|_{L^q_tL^p_x(0,s,\Gamma_t)}(1+\|\vec{f}\|_{L^q_tL^p_x(0,s,\Gamma_t)}).
  \end{split}
 \end{equation*}
 Passing $\tau\to s$ we obtain
 \begin{equation}
  \label{eq:4.10}
   \begin{split}
    \exp\left(-\Cr{c-3.2}s^\frac14\right)
    v(y,s)
   &\leq 2\|v_0\|_\infty^2
    +\Cr{c-3.1}|\Omega| M_Ts
    +\Cr{c-3.3}|\Omega|M_T^2s
   \\
    &\quad+
    \Cr{c-4.1}M_T^3\|\vec{f}\|_{L^q_tL^p_x(0,s,\Gamma_t)}(1+\|\vec{f}\|_{L^q_tL^p_x(0,s,\Gamma_t)})
   \end{split} 
 \end{equation}
 where $v_0:=\sqrt{1+|du_0|^2}$.
 Compared \eqref{eq:4.11} with \eqref{eq:4.10}, we obtain for all
 $y\in\overline{\Omega}$ and $0<s\leq T$
  \begin{equation}
   \label{eq:4.12}
    \begin{split}
     \exp\left(-\Cr{c-3.2}s^\frac14\right)
     v(y,s)
     &\leq 2\|v_0\|_\infty^2
     +\Cr{c-4.5}(s)M_T^3
    \end{split} 
  \end{equation}
 where
 \begin{equation*}
  \Cl[const]{c-4.5}(s)
   :=(\Cr{c-3.1}+\Cr{c-3.3})|\Omega|s
   +\Cr{c-4.1}\|\vec{f}\|_{L^q_tL^p_x(0,s,\Gamma_t)}
   (1+\|\vec{f}\|_{L^q_tL^p_x(0,s,\Gamma_t)}).
 \end{equation*}
 Note that $\Cr{c-4.5}(s)$ is monotone increasing and
 $\Cr{c-4.5}(s)\rightarrow0$ as $s\downarrow 0$.
 
 Now, select $(y,s)$ such that $M_T=v(y,s)$ and $Y=(y,u(y,s))$. Then,
 by monotonicity of $\Cr{c-4.5}(s)$
 \begin{equation}
  \label{eq:4.13}
   \Cr{c-4.5}(T)M_T^3
   -\exp\left(-\Cr{c-3.2}T^\frac14\right)M_T
   +2\|v_0\|_\infty^2\geq0.
 \end{equation}
 When $4\|v_0\|_\infty^2\leq M_T\leq 5\|v_0\|_\infty^2$ for some
 $T>0$. Then by \eqref{eq:4.13} we have
 \[
  \Cr{c-4.5}(T)
  \geq \frac{\exp\left(-\Cr{c-3.2}T^\frac14\right)}{M_T^2}
  -\frac{2\|v_0\|_\infty^2}{M_T^3} \\
 \geq \frac{\Cr{c-4.7}(T)}{\|v_0\|_\infty^4}
\]
 where
 \[
 \Cl[const]{c-4.7}(T)=\frac{\exp\left(-\Cr{c-3.2}T^\frac14\right)}{25}
 -\frac{1}{32}.
 \]
 Thus, we have
 \[
  M_{T_1}\leq 4\|v_0\|_\infty^2=1+\|du_0\|_\infty^2
 \]
 where $T_1$ is sufficiently small constant satisfying
 $\Cr{c-4.7}(T)>0$ and $ \Cr{c-4.5}(T)<\frac{\Cr{c-4.7}(T)}{\|v_0\|_\infty^4}$.
\end{proof}

\section{Existence of classical solutions}
\label{sec:5}
Finally, we prove Theorem $\ref{thm:2}$. To use the Schauder estimates, we
provide the following:
\begin{lemma}
 Let $T>0$ and $u\in C^{2,1}(Q_T )$ be a solution of $(\ref{eq:1.1})$. Then
 \begin{equation}
  \sup _{Q_T}|u| \leq \sup_{\Omega \times \mathbb{R} \times[0,1]}|\vec{f}| \, T + \sup _{\Omega}| u_0 |.
   \label{supu}
 \end{equation}
\end{lemma}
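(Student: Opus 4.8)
The plan is to read \eqref{supu} as a comparison estimate against the spatially constant barriers $\phi^{\pm}(x,t):=\pm\big(Mt+\sup_\Omega|u_0|\big)$, where $M:=\sup_{\Omega\times\R\times[0,1]}|\vec f|$ (I will take $T\le 1$, so that $M$ bounds $\vec f$ on $\Omega\times\R\times[0,T]$; otherwise replace $[0,1]$ by $[0,T]$). Concretely, I would show that $w:=u-Mt-\sup_\Omega u_0$ satisfies $w\le 0$ on $\overline{Q_T}$, and symmetrically that $\tilde w:=-u-Mt+\inf_\Omega u_0\le 0$; adding these gives $\sup_{Q_T}|u|\le MT+\sup_\Omega|u_0|$. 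Equivalently, $\phi(x,t):=Mt+\sup_\Omega u_0$ is a supersolution of \eqref{eq:1.1} with $d\phi\cdot\nu\equiv0$ on $\partial\Omega$ and $\phi(\cdot,0)\ge u_0$, so the statement is a comparison principle for \eqref{eq:1.1}.

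First I would put \eqref{eq:1.1} in a form to which the maximum principle applies. Multiplying \eqref{eq:1.1} by $v=\sqrt{1+|du|^2}$ and using $\vec n=v^{-1}(-du,1)$ gives $v(\vec f\cdot\vec n)=\vec f\cdot(-du,1)=-f'\cdot du+f_{n+1}$ with $\vec f=(f',f_{n+1})$, $f'\in\R^n$, so along the solution
\[
\partial_t w-a^{ij}(dw)\,\partial_{ij}w+f'(x,u,t)\cdot dw
=f_{n+1}(x,u,t)-M\le 0,
\]
where $a^{ij}(p):=\delta_{ij}-p_ip_j/(1+|p|^2)$. Two structural remarks make this a bona fide linear parabolic subsolution inequality: (i) the matrix $(a^{ij}(p))$ is symmetric positive definite with eigenvalues in $(0,1]$, so although its ellipticity constant $1/(1+|p|^2)$ degenerates as $|p|\to\infty$, the coefficients themselves are bounded \emph{independently of} $du$, and in particular $(a^{ij})$ is positive semidefinite; (ii) since $0<t\le 1$ and $x\in\Omega$, one has $|f'(x,u(x,t),t)|\le M$ and $f_{n+1}(x,u(x,t),t)-M\le 0$. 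There is no zeroth-order term, and the parabolic boundary data are $w(\cdot,0)=u_0-\sup_\Omega u_0\le 0$ and $dw\cdot\nu=du\cdot\nu=0$ on $\partial\Omega\times(0,T]$.

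Then I would invoke the weak maximum principle: $\max_{\overline{Q_T}}w$ is attained on the parabolic boundary $(\overline\Omega\times\{0\})\cup(\partial\Omega\times[0,T])$; on the initial slice $w\le 0$, so it remains to exclude a strictly positive maximum on $\partial\Omega\times(0,T]$. Choosing the first time $t_0>0$ at which such a maximum is attained, say at $x_0\in\partial\Omega$, the parabolic Hopf boundary point lemma — applicable because $\partial\Omega$ is smooth (interior sphere condition) and $u$ is $C^1$ up to $\partial\Omega\times(0,T]$ — forces the outward normal derivative $dw(x_0,t_0)\cdot\nu>0$, contradicting the Neumann condition $dw\cdot\nu\equiv0$. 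Hence $w\le 0$. The same argument applies verbatim to $\tilde w$: since $a^{ij}(p)$ is even in $p$, $\tilde w$ solves $\partial_t\tilde w-a^{ij}(d\tilde w)\,\partial_{ij}\tilde w+f'(x,u,t)\cdot d\tilde w=-f_{n+1}(x,u,t)-M\le 0$ with $d\tilde w\cdot\nu=-du\cdot\nu=0$ and $\tilde w(\cdot,0)\le 0$, giving $\tilde w\le 0$. Combining yields \eqref{supu}. The only step that requires genuine care is the Hopf lemma at the lateral boundary — i.e.\ ruling out that the extremum of $w$ sits on $\partial\Omega\times(0,T]$ — which is exactly where the smoothness of $\partial\Omega$ and the up-to-the-boundary $C^1$ regularity are used; everything else is the routine observation that the non-divergence form $a^{ij}(du)\partial_{ij}u$ of the mean curvature operator has coefficients bounded independently of $du$, so the standard maximum principle applies despite the lack of uniform ellipticity.
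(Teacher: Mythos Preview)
Your proof is correct and follows the same strategy as the paper: both compare $u$ against the spatially constant barrier $\phi(x,t)=Mt+\sup_\Omega|u_0|$ (and its negative). The paper simply checks that this $\phi$ is a supersolution of \eqref{eq:1.1} and then invokes ``the comparison principle'' in one line, whereas you unfold that comparison principle explicitly---rewriting the equation in non-divergence form, freezing the coefficients along the solution, and applying the weak maximum principle together with the Hopf boundary lemma to handle the Neumann condition on $\partial\Omega\times(0,T]$. So there is no genuine difference in approach, only in the level of detail.
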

\begin{proof}
We set $w(x,t)=\sup_{\Omega \times \mathbb{R} \times[0,1]}|\vec{f}| \, t + \sup _{\Omega}| u_0 |$. We note that
\[ \partial_t w \geq \sqrt{1+|dw|^2} \Div \left( \frac{dw}{\sqrt{1+|dw|^2}} \right) + \vec{f}(x,w,t)\cdot (-dw,1).  \]
Using the comparison principle, we determine that
\[w\geq u, \qquad (x,t) \in Q_T. \]
Similarly to the above argument, 
\[u\geq -w, \qquad (x,t) \in Q_T. \]
Hence, we obtain $(\ref{supu})$.
\end{proof}

\begin{proof}%
 [Proof of Theorem $\ref{thm:2}$]
 Fix $\alpha \in (0,1)$. We assume that $u_0 \in C^{2,\alpha}(\Omega)$
 and let $T>0$, which is given by Theorem \ref{thm:1}. 
 Let $\beta \in (0,\alpha]$ and we set $X:= C^{1,\beta }(Q_T )$.
 We consider the following linear parabolic type equation:
 \begin{equation}
  \left\{
   \begin{aligned}
    \partial _t u &= \sum _{i,j=1} ^{n} a_{ij}(dw) \partial _{x_i x_j} u + \vec{f}(x,w,t) \cdot (-du,1)& , \qquad &\text{in} \ Q_T  , \\
    du\cdot \nu \Big|_{\partial \Omega} &=0,\\
    u\Big|_{t=0}&=u_0 ,& \qquad &\text{on} \ \Omega,
   \end{aligned}
 \right.
 \label{mcf3}
 \end{equation}
 where $w \in X$ and $\displaystyle a_{ij}(r)=\left( \delta _{ij}
 -\frac{r_i r_j }{1+|r|^2} \right)$ for $r=(r_1,\dots , r_{n})$. Because 
 
 \begin{equation}
  \begin{split}
   \|a_{ij}(dw)\|_{C^{\alpha\beta}(Q_T)}
   &\leq\|a_{ij}(dw)\|_{C^{\beta}(Q_T)} \\
   &\leq\|a_{ij}\|_{C^1 (\mathbb{R}^n)} \|dw\|_{C^\beta (Q_T)}
   \leq \| a_{ij}\|_{C^1 (\mathbb{R}^n)} \|w\|_{X}
  \end{split}
  \label{aij}
 \end{equation}
 for any $w\in X$, \eqref{mcf3} is uniformly parabolic in $Q_T$. Note
 that $\|a_{ij}\|_{C^1 (\mathbb{R}^n)} <\infty$. 
 Using 
 \eqref{eq:2.1}, we obtain
 \begin{equation}
 \|\vec{f}(\cdot,w,\cdot) \|_{C^{\alpha\beta}(Q_T)}
  \leq K\|w\|_{C^\beta (Q_T)}
  \leq K\|w\|_{X}
 \label{fv}
 \end{equation}
 for any $w \in X$. Hence, for any $w\in X$ there exists a unique solution $u_w \in C^{2,\alpha\beta}(Q_T) \subset X$ of $(\ref{mcf3})$ such that
 \begin{equation}
 \|u_w\| _{C^{2,\alpha\beta }(Q_T)}\leq \Cr{c-5.1},
 \label{schauder2}
 \end{equation}
 where $\Cl[const]{c-5.1}>0$ depends only on 
 $n,\alpha, \beta, \|w\|_{X}, \|u_0\|_{C^{2,\alpha}(\Omega)}$ 
 and $K$ (see \cite[Theorem 4.5.3]{MR0241822}).

 We define $A :X\to X$ as $A w=u_w$. Note that $A$ is continuous and compact. We show that
 \[ S :=\{ u \ | \ u = \sigma A u \ \text{in} \ X, \ \text{for some} \ \sigma \in [0,1] \} \]
 is bounded in $X$. If $u\in S $, then
 \begin{equation}
 \left\{
 \begin{aligned}
  \partial _t u &= \sum _{i,j=1} ^{n} a_{ij}(du) \partial _{x_i x_j} u + \vec{f}(x,u,t) \cdot (-du,\sigma ),& \qquad &\text{in} \ Q_T  , \\
 du\cdot \nu \Big|_{\partial \Omega} &=0,&&\\
 u\Big|_{t=0}&= \sigma u_0,& \qquad &\text{on} \ \Omega.
 \end{aligned}
 \right.
 \label{mcf4}
 \end{equation}
 According to Theorem \ref{thm:1}, 
 \begin{equation}
  \sup_{Q_T} |du| \leq 4(1+\| du_0\|_\infty^2).
   \label{estc1}
 \end{equation}
 Because $du\cdot\nu=0$ on $\partial \Omega$, we can use similar
 arguments to the interior Schauder estimates (cf. \cite[Theorem
 6.2.1]{MR0241822}); hence,
 \begin{equation}
 \|du\|_{C^\beta (Q_T) } \leq \Cr{c-5.3},
 \label{estc2}
 \end{equation}
 where $\Cl[const]{c-5.3}=\Cr{c-5.3}$ is a positive constant depending
 only on $n$, $\sup _{Q_T}|u|$, $\|du_0 \|_{C^\alpha(\Omega)}$,
 $\sup_{\Omega \times \mathbb{R} \times [0,T]} |\vec{f}|$, and $\partial
 \Omega$.  

 Using the same argument as \eqref{schauder2},
 \begin{equation}
 \|u\|_{X}\leq \|u\|_{C^{2,\alpha\beta}(Q_T)}\leq \Cr{c-5.4},
 \label{estc3}
 \end{equation}
 where $\Cl[const]{c-5.4}=\Cr{c-5.4}  (n,\alpha, \|u_0\|_{C^{2,\alpha} (\Omega)}, \Cr{c-5.3} , K)>0$ (see ~\cite{MR0241822}). According to \eqref{estc1}, \eqref{estc2}, and \eqref{estc3}, $\Cr{c-5.4}$ depends only on $n,\alpha,\|u_0 \|_{C^{2,\alpha} (\Omega)}, \sup _{\Omega} |du_0|$ and $K$. Thus, $S$ is bounded in $X$.
 According to the Leray-Schauder fixed point theorem, there exists a solution $u \in C^{ 2,\alpha}(Q_T )$ of $(\ref{eq:1.1})$. 
 
 We return to the assumption that $u_0$ is a Lipschitz function with
 a Lipschitz constant $L>0$. Set $\varepsilon >0$. We choose smooth
 functions $u_0 ^k$ converging uniformly to $u_0$ on $\Omega$. We note
 that according to Theorem $\ref{thm:1}$, 
 
\[
 \sup_{Q_T}|du ^k|\leq 4(1+L^2)
\]
 for all $k\geq 1$. Using an argument similar to
 \eqref{estc2}, \eqref{estc3} and the interior Schauder estimates, there
 exists $\Cl[const]{c-5.5}=\Cr{c-5.5}(n,\alpha,L,\varepsilon,K)>0$ such that
 \begin{equation*}
 \sup _k 
  \| u^k \|_{C^{ 2,\alpha}(Q_T ^\varepsilon)}
  \leq  \Cr{c-5.5}.
 \end{equation*}
 where $u^k$ is the solution of $(\ref{eq:1.1})$ with $u^k (x,0)=u_0
 ^k(x)$ in $\Omega$. Note that $\varepsilon=\dist (Q^\varepsilon
 _T,\partial Q_T)$.

 Hence, for any $\varepsilon >0$, passing to a subsequence if necessary,
 $\{ u^k \}_{k=1} ^{\infty}$ converges to a classical solution $u$ in
 $Q_T ^\varepsilon$ and we obtain \eqref{schauder}. Therefore, by
 diagonal arguments, we obtain the solution $u \in C(\overline{Q_T})\cap
 C^{2,1} (Q_T ^\varepsilon)$ of \eqref{eq:1.1}. The comparison principle
 implies the uniqueness of the solution of \eqref{eq:1.1}. Thus, we have
 proved Theorem $\ref{thm:2}$.
\end{proof}

\section{Optimality}
\label{sec:6}

We now discuss optimality about the assumption to transport terms for
the gradient estimates. We present some transport term $\vec{f}$ such
that the gradient of some solution
of \eqref{eq:1.1} is not bounded and
$\|\vec{f}\|_{L^q_tL^p_x(0,1,\Gamma_t)}<\infty$ for some $p$, $q$
satisfying $\frac{n}{p}+\frac{2}{q}>1$(see Figure \ref{fig:6.1}).  

\begin{figure}
 \centering \includegraphics[height=120pt]{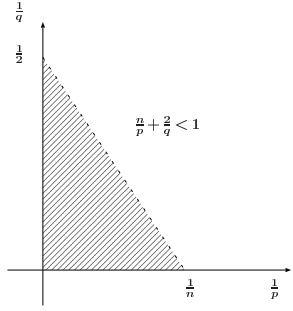} \caption{For
 $p$ and $q$ \emph{not} belonging to the above oblique line, there is a
 transport term such that the gradient of some solution of
 \eqref{eq:1.1} is not bounded.}  \label{fig:6.1}
\end{figure}

Let $\phi=\phi(\xi)$ be a smooth function on $\R^n$ compactly supported
on $\Omega$. We seek the transport term such that
$u(x,t)=(1-t)^\alpha\phi(x/\sqrt{1-t})$ is a solution of \eqref{eq:1.1}
for $\alpha\in\R$. By direct calculation, we obtain
\[
 \begin{split}
  du(x,t)&=(1-t)^{\alpha-\frac12}d_\xi\phi,\quad
  d^2u(x,t)=(1-t)^{\alpha-1}d^2_\xi\phi \\  
  \partial_tu(x,t)&=(1-t)^{\alpha-1}\left(-\alpha\phi
  +\frac12 d_\xi\phi\cdot\frac{x}{\sqrt{1-t}}
  \right). 
 \end{split}
\]
Note that if $\alpha<\frac12$, then $du$ is not bounded.  If
$u(x,t)=(1-t)^\alpha\phi(x/\sqrt{1-t})$ is a solution of \eqref{eq:1.1},
then we obtain
\begin{equation}
 \label{eq:6.1}
  \begin{split}
  \vec{f}(x,u,t)\cdot(-du,1)
  &=(1-t)^{\alpha-1}\left(-\alpha\phi
  +\frac12 d_\xi\phi\cdot\frac{x}{\sqrt{1-t}}
  \right) \\
  &\quad-(1-t)^{\alpha-1}\Delta_\xi\phi
  +(1-t)^{3\alpha-2}
  \frac{d_\xi^2\phi:(d_\xi\phi\otimes
   d_\xi\phi)}{1+(1-t)^{2\alpha-1}|d_\xi\phi|^2}. 
 \end{split}
\end{equation}
Thus, for $p,q\geq1$ and $\alpha<\frac12$ if $t$ is sufficiently
near to $1$, we obtain
\begin{equation*}
 \Cr{c-6.1}(\phi)(1-t)^{3\alpha-2+\frac{n}{2p}+\frac{2\alpha-1}{p}}
 \leq \left(\int_{\Gamma_t}|\vec{f}|^p\,d\mathscr{H}^n\right)^\frac1p 
 \leq\Cr{c-6.2}(\phi)(1-t)^{3\alpha-2+\frac{n}{2p}+\frac{2\alpha-1}{p}}
\end{equation*}
for some constants $\Cl[const]{c-6.1}(\phi), \Cl[const]{c-6.2}(\phi)>0$
since $\frac1{2(1-t)^{2\alpha-1}|d\phi|^2}\leq
\frac1{1+(1-t)^{2\alpha-1}|d\phi|^2}\leq \frac1{(1-t)^{2\alpha-1}|d\phi|^2}$.

When $1+\varepsilon_0:=\frac{n}{p}+\frac{2}{q}>1$ and
$\|\vec{f}\|_{L^q_tL^p_x(0,1,\Gamma_t)}<\infty$, then 
\[
 \int_0^1(1-t)^{(3\alpha-2+\frac{n}{2p}+\frac{2\alpha-1}{p})q}\,dt<\infty
\]
hence we obtain $\alpha>\frac12-\frac{\varepsilon_0}{2}(3+\frac2p)^{-1}$. For
$\alpha_0:=\frac12-\frac{\varepsilon_0}{4}(3+\frac2p)^{-1}$, let $\vec{f}$ be given by
\eqref{eq:6.1} with $\alpha=\alpha_0$. Then
$u(t,x):=(1-t)^\alpha\phi(x/\sqrt{1-t})$ is a solution of
\eqref{eq:6.1}, $\|\vec{f}\|_{L^q_tL^p_x(0,1,\Gamma_t)}<\infty$ for some
$p$, $q$ satisfying $\frac{n}{p}+\frac{2}{q}>1$ and $du$ is not bounded
on $\Omega\times(0,1)$.

\section{Final remark}

By our argument, we do not obtain a time global classical solution of
\eqref{eq:1.1}. Indeed, when $f\not\equiv0$, the maximum existence time
in Theorem \ref{thm:2} cannot be taken infinity.  On the other hand,
Huisken~\cite[Theorem 1.1]{MR0983300} proved that there exists a time
global solution of \eqref{eq:1.1} under $\vec{f}\equiv 0$ and $u_0 \in
C^{2,\alpha}(\overline{\Omega}) $ and the solution converges to some
constant function as $t\to \infty$. In the case of $\vec{f} \not \equiv
0$, a priori time global gradient bounds is not known hence we do not
show the global existence for solutions of \eqref{eq:1.1}. It is
expected that with the assumption \eqref{eq:2.1} there is a time global
solution of \eqref{eq:1.1} and that solution converges to a solution of
the prescribed mean curvature equation $\vec{H}=-(\vec{f} \cdot \vec{n})
\vec{n}$.

To apply our results for $\Gamma_t$ in \eqref{eq:1.4}, the velocity
vector $\vec{f}$ in \eqref{eq:1.4} needs to be smooth enough. On the
other hand, if $\vec{f}$ is a weak solution of \eqref{eq:1.4}, namely
$\vec{f} \in L^2_t(H^1_x)$, then $\vec{f}$ does not satisfy our
assumption. When we consider (1.4) with $\vec{f} \in L^2_t(H^1_x)$, we
need to study the relationship between the velocity vector $\vec{f}$ and
the phase boundary $\Gamma_t$.

\section*{Acknowledgments}
The authors are grateful to the referee for his or her helpful comments.
This work was supported by JSPS KAKENHI Grant Numbers 25800084,
25247008, 16K17622, 17J02386. The second author is supported by JSPS
Research Fellowships for Young Scientists.


\begin{thebibliography}{1}

 \bibitem{MR0397520}
	 W. K. Allard,
	 \emph{On the first variation of a varifold: boundary
	 behavior}, 
	 Ann. of Math. (2) \textbf{101} (1975), 
	 418--446.

 \bibitem{MR1384396}
	 S. J. Altschuler and L. F. Wu,
	 \emph{Translating surfaces of the non-parametric mean curvature flow
	 with prescribed contact angle}, 
	 Calc. Var. Partial Differential Equations \textbf{2} (1994), 
	 101--111.
	 
 \bibitem{MR2180601}
	 J.~A. Buckland, 
	 \emph{Mean curvature flow with free boundary on smooth
	 hypersurfaces}, 
	 J. Reine Angew. Math. \textbf{586} (2005), 71--90.
	 
 \bibitem{MR2099114} 
	 T.~H. Colding and W.~P. Minicozzi, II, 
	 \emph{Sharp estimates for mean curvature flow of graphs},
	 J. Reine Angew. Math. \textbf{574} (2004), 187--195.

 \bibitem{MR2024995}
	 K. Ecker, 
	 \emph{Regularity theory for mean curvature flow}, 
	 Vol. 57, Birkh\"auser Boston, Inc., Boston, MA,
	 2004. 

 \bibitem{MR0998603}
	 K. Ecker and G. Huisken, 
	 \emph{Interior curvature estimates for hypersurfaces of
	 prescribed mean curvature}, 
	 Ann. Inst. H. Poincar\'e Anal. Non Lin\'eaire 
	 \textbf{6} (1989), 251--260. 

 \bibitem{MR1025164}
	 K. Ecker, and G. Huisken,
	 \emph{Mean curvature evolution of entire graphs},
	 Ann. of Math. (2), \textbf{130} (1989),
	 453--471.

 \bibitem{MR1117150}
	 K. Ecker and G. Huisken, 
	 \emph{Interior estimates for hypersurfaces moving by mean
	 curvature},
	 Invent. Math. \textbf{105} (1991), 547--569.

 \bibitem{MR3479560}
	 N. Edelen,
	 \emph{Convexity estimates for mean curvature flow with free
	 boundary},
	 Adv. Math. \textbf{294} (2016), 1--36.
	 
 \bibitem{1602.03614}
	 N. Edelen,
	 \emph{The free-boundary Brakke flow},
	 arXiv preprint arXiv:1602.03614, 2016.
	 
 \bibitem{MR0863638}
	 M. Gr\"uter and J. Jost,  
	 \emph{Allard type regularity results for varifolds with free
	 boundaries}, 
	 Ann. Scuola Norm. Sup. Pisa Cl. Sci. (4) \textbf{13} (1986), 
	 129--169.

 \bibitem{MR0983300}
	 G. Huisken, 
	 \emph{Nonparametric mean curvature evolution with boundary
        conditions}, 
	 J. Differential Equations \textbf{77} (1989), 369--378.

 \bibitem{MR1030675}
	 G. Huisken, 
	 \emph{Asymptotic behavior for singularities of the mean
	 curvature flow}, 
	 J. Differential Geom. \textbf{31} (1990), 285--299.

 \bibitem{MR1237490}
	 T. Ilmanen, 
	 \emph{Convergence of the {A}llen-{C}ahn equation to {B}rakke's
	 motion by mean curvature}, 
	 J. Differential Geom. \textbf{38} (1993), 417--461.
	 
 \bibitem{MR3194675}
	 K. Kasai and Y. Tonegawa, 
	 \emph{A general regularity theory for weak mean curvature
	 flow}, 
	 Calc. Var. Partial Differential Equations
	 \textbf{50} (2014), 1--68.

 \bibitem{MR2886118}
	 A.~N. Koeller, 
	 \emph{Regularity of mean curvature flows with {N}eumann free
	 boundary conditions}, 
	 Calc. Var. Partial Differential Equations \textbf{43}
	 (2012), 265--309.

 \bibitem{MR0241822}
	 O.~A. Lady{\v{z}}enskaja, V.~A. Solonnikov, and
	 N.~N. Ural'ceva, 
	 \emph{Linear and quasilinear equations of parabolic type}, 
	 Translated from the Russian by
	 S. Smith. Translations of Mathematical Monographs, Vol. 23, 
	 American Mathematical Society, Providence, R.I., 1967.

\bibitem{MR2956324}
	C. Liu, N. Sato and Y. Tonegawa, 
	\emph{Two-phase flow problem coupled with mean curvature flow}, 
	Interfaces Free Bound. \textbf{14} (2012), 185--203.

\bibitem{MR1857673}
      C. Liu and N. J. Walkington, 
      \emph{An Eulerian description of fluids containing visco-hyperelastic particles}, 
      Arch. Ration. Mech. Anal. \textbf{159} (2001), 229--252.

 \bibitem{MR3348119}
	M. Mizuno and Y. Tonegawa, 
	\emph{Convergence of the {A}llen-{C}ahn equation with {N}eumann
	boundary conditions}, 
	SIAM J. Math. Anal. \textbf{47} (2015), 1906--1932.

 \bibitem{MR1402731}
	 A. Stahl, 
	 \emph{Convergence of solutions to the mean curvature flow with
	 a {N}eumann boundary condition}, 
	 Calc. Var. Partial Differential Equations
	 \textbf{4} (1996), 421--441.
	 
 \bibitem{MR1393271}
	 A. Stahl, 
	 \emph{Regularity estimates for solutions to the mean curvature
	 flow with a {N}eumann boundary condition}, 
	 Calc. Var. Partial Differential Equations 
	 \textbf{4} (1996), 385--407.

 \bibitem{MR1425580}
	 A. Stone,
	 \emph{A boundary regularity theorem for mean curvature flow}, 
	 J. Differential Geom. 
	 \textbf{44} (1996), 371--434.

 \bibitem{MR2691040}
	 A. G. Stone,
	 \emph{Singular and boundary behaviour in the mean curvature flow of
	 hypersurfaces}, 
	 PhD thesis, Stanford University, 1994.
	 
 \bibitem{MR3058702}
	 K. Takasao, 
	 \emph{Gradient estimates and existence of mean curvature flow
	 with transport term}, 
	 Differential Integral Equations \textbf{26} (2013), 141--154. 

 \bibitem{MR3176585}
	 Y. Tonegawa, 
	 \emph{A second derivative {H}\"older estimate for weak mean
	 curvature flow}, Adv. Calc. Var. \textbf{7} (2014), 91--138.

 \bibitem{1405.7774}
	 G. Wheeler and V.-M. Wheeler,
	 \emph{Mean curvature flow with free boundary outside a hypersphere}, 
	 arXiv preprint arXiv:1405.7774, 2014.
	 
 \bibitem{MR3200337}
	 V.-M. Wheeler,
	 \emph{Mean curvature flow of entire graphs in a half-space with a
	 free boundary}, 
	 J. Reine Angew. Math. \textbf{690} (2014), 115--131. 
	 
 \bibitem{MR3150205}
	 V.-M. Wheeler,
	 \emph{Non-parametric radially symmetric mean curvature flow with a
	 free boundary}, 
	 Math. Z. \textbf{276} (2014), 281--298. 

 \bibitem{MR1014685}
	 W.~P. Ziemer, 
	 \emph{Weakly differentiable functions}, 
	 Graduate Texts in Mathematics, vol. 120, Springer-Verlag, New
	 York, 1989.
\end{thebibliography}
\end{document}